\documentclass[reqno]{amsart}
\makeatletter
\@namedef{subjclassname@2020}{%
  \textup{2020} Mathematics Subject Classification}
\makeatother
\usepackage{amssymb}
\usepackage[utf8]{inputenc} 
\usepackage[T1]{fontenc}    
\usepackage{url}            
\usepackage{booktabs}       
\usepackage{amsfonts}       
\usepackage{nicefrac}       
\usepackage{microtype}      
\usepackage{color}
\usepackage{lipsum}
\usepackage{float}
\usepackage{calrsfs}
\usepackage{enumerate}
\usepackage{tikz-cd}
\usepackage{tikz}

\usetikzlibrary{positioning}
\usepackage{graphicx}
\usepackage{old-arrows} 
\usetikzlibrary{arrows}
\usepackage{comment}
\DeclareMathAlphabet{\pazocal}{OMS}{zplm}{m}{n}

\usepackage{eqnarray,amsmath}

\input xygraph
\input xy
\xyoption{all}

\def\reg{\mathop{\text{reg}}}

\def\F{\mathcal F}

\def\Q{\mathbb Q}

\def\a{\alpha}
\def\b{\beta}
\def\l{\lambda}

\def\w{\omega}

\def\remove#1{}

\def\M{\mathop{\mathcal M}}
\DeclareMathOperator{\sop}{supp}

\newcommand{\C}{\mathcal{C}}
\newcommand{\ann}{{\rm Ann}}
\DeclareMathOperator{\Span}{span}

\def\rad{\mathcal{R}_{abs}}
\def\asi{\rm{asi}}

\def\top{\bf DCC}

\def\P{\mathcal{P}}
\def\azd{\mathcal{D}}
\def\vacio{\emptyset}
\newtheorem{lemma}{Lemma}[section]
\newtheorem{corollary}[lemma]{Corollary}
\newtheorem{theorem}[lemma]{Theorem}
\newtheorem{proposition}[lemma]{Proposition}
\newtheorem{remark}[lemma]{Remark}

\newtheorem{example}[lemma]{Example}


\title[Properties of evolution algebras through graphs]{Centroid and algebraic properties of evolution algebras through graphs}
\author[Y. Cabrera]{Yolanda Cabrera Casado}
\address{Y. Cabrera Casado: Departamento de Matem\'atica Aplicada, E.T.S. Ingenier\'\i a Inform\'atica, Universidad de M\'alaga, Campus de Teatinos s/n. 29071 M\'alaga.   Spain.}
\email{yolandacc@uma.es}

\author[M. I. Gon\c calves]{Maria Inez Cardoso Gon\c calves}
\address{M. I. Cardoso Gon\c calves: Departamento de Matem\'atica, Universidade Federal de Santa Catarina, Florian\'opolis, SC, 88040-900 - Brazil}
\email{maria.inez@ufsc.br}

\author[D. Gon\c calves]{Daniel Gon\c calves}
\address{D. Gon\c calves: Departamento de Matem\'atica, Universidade Federal de Santa Catarina, Florian\'opolis, SC, 88040-900 - Brazil}
\email{daemig@gmail.com}

\author[D. Mart\'\i n]{Dolores Mart\'\i n Barquero}
\address{D. Mart\'\i n Barquero: Departamento de Matem\'atica Aplicada, Escuela de Ingenier\'\i as Industriales, Universidad de M\'alaga, Campus de Teatinos s/n. 29071 M\'alaga.   Spain.}
\email{dmartin@uma.es}

\author[C. Mart\'\i n]{C\'andido Mart\'\i n Gonz\'alez}
\address{C. Mart\'\i n Gonz\'alez: Departamento de \'Algebra Geometr\'{\i}a y Topolog\'{\i}a, Fa\-cultad de Ciencias, Universidad de M\'alaga, Campus de Teatinos s/n. 29071 M\'alaga.   Spain.}
\email{candido\_m@uma.es}

\author[I. Ruiz]{Iv\'an Ruiz Campos}
\address{I. Ruiz Campos:  Departamento de \'Algebra Geometr\'{\i}a y Topolog\'{\i}a, Fa\-cultad de Ciencias, Universidad de M\'alaga, Campus de Teatinos s/n. 29071 M\'alaga. Spain.}
\email{ivaruicam@uma.es}

\subjclass[2020] {17A60, 17D92, 05C25.} 
\keywords{Evolution algebra, graphs, centroid, prime,  semiprime, nondegenerate, von Neumann regular.}

\begin{document}
\maketitle
\begin{abstract}
    The leitmotiv of this paper is linking algebraic properties of an evolution algebra with combinatorial properties of the (possibly several) graphs that one can associate to the algebra. We link  nondegeneracy, zero annihilator,  absorption property, von Neumann regularity and primeness with suitable properties in the associated graph. In the presence of semiprimeness, the property of primeness is equivalent to any associated graph being downward directed. We also  provide a description of the prime ideals in an evolution algebra and prove that certain algebraic properties, such as semiprimeness and perfection, can not be characterized in combinatorial terms.
    We describe the centroid of evolution algebras as constant functions along the connected components of  its associated graph. The dimension of the centroid of a zero annihilator algebra $A$ agrees with the cardinal of the connected components of any possible graph associated to $A$. This is the combinatorial expression of an algebraic uniqueness property in the decomposition of $A$ as indecomposable algebras with $1$-dimensional centroid. 
\end{abstract}
\section{Introduction}

The term "algebraic combinatorics" was coined in the early 1970s by Bannai. The subject has gained popularity with the publication of the book  \cite{Bannai} and has since emerged as a prominent example of interactions between areas of Mathematics. Nowadays, the term "combinatorial algebra" loosely refers to any algebra constructed from a combinatorial object; see \cite{exel}. Recent examples include graph, ultragraph, and higher-rank graph algebras, all of which have strong ties with combinatorics and symbolic dynamics; see \cite{williams} for example.  Across these examples, much of their algebraic structure can be linked to the combinatorial properties of the underlying objects, facilitating their comprehension, analysis, and application in other areas.

Evolution algebras were initially defined as models for the study of non-Mendelian genetics in \cite{tianvoj}, and their exploration has significantly expanded in recent years. In \cite{Elduque2}, a combinatorial structure was associated to an evolution algebra: a directed graph. However, this graph's uniqueness is contingent upon the chosen algebraic basis, presenting challenges for applying combinatorial techniques to evolution algebras. In \cite{squares}, the authors propose to relate all possible graphs associated to an evolution algebra. While this approach has been successful for two-dimensional algebras, extending it to higher dimensions remains an open challenge.

Despite the difficulties pointed out above, there have been some advancements in connecting evolution algebras to their associated graphs. For instance, in \cite{Elduque}, the group of automorphisms is related to a graph of an evolution algebra, and in \cite{conecting}, ideals in evolution algebras are associated to hereditary subsets of its graph. Strikingly, the combinatorial characterization of some algebraic properties of evolution algebras (such as the link between ideals and hereditary subsets (\cite{conecting}), and the link between the graph downward directed condition and primeness of the algebra (Corollaries~\ref{cor_prime_eq_down} and \ref{cool})) mirror those in Leavitt path algebras, despite the former being commutative and non-associative whereas the latter is non-commutative and associative. Furthermore, the centroid of simple evolution algebras and simple Leavitt path algebras agree (being equal to the base field, see \cite{centroid} and Theorem~\ref{praiajurere}). These findings suggest a connection between these two classes of algebras, the formal nature of which (if any) remains a mystery.

In this article, we aim to establish new connections between the structure of evolution algebras and the combinatorial properties of their associated graphs, contributing to the inclusion of evolution algebras in the class of "combinatorial algebras". Moreover, we also identify instances when an algebraic property of an evolution algebra can not be inferred from their associated graphs, such as semiprimeness and perfection.

After a preliminary session, where we also show that the (combinatorial) connected components of a graph agree with the (topological) components of the DCC topology (introduced in  \cite{CMMSS}) on the graph, in the following sections, we study the following properties of an evolution algebra.

In Section~3, we show that nondegeneracy and having zero annihilator can be seen from any graph associated to an evolution algebra, see  Proposition~\ref{ostra}. In particular, an evolution algebra has zero annihilator if and only if the graph is sinkless. In this proposition, we also link the existence of sinks with the condition of degeneracy. 
Next, in Proposition~\ref{nocaracsemiprime}, we show that the semiprimeness and perfection of an evolution algebra do not have a graphical characterization.  This is exemplified by presenting two algebras with the same graph,  where one is semiprime, and the other is not. Given this shortcoming of graphs, we provide an algorithm, using the structure matrix of an evolution algebra, to decide whether the algebra is semiprime, see Section~\ref{jabutiaqui} (in this section, we also provide an algorithm to decide the nondegeneracy of the algebra). In the case of perfect algebras, we completely characterize the existence of nonzero absolute zero divisors in terms of sinks in a subgraph, see Proposition~\ref{colorida}.  We show that the downward directed condition is equivalent to primeness in Corollary~\ref{cool} (in fact, this result holds not only under the assumption of perfection but also under semiprimeness, see Corollary~\ref{cor_prime_eq_down}).  We continue the investigation of primeness with a description of the prime ideals in an evolution algebra, see Proposition~\ref{rp3barulhento} and Theorem~\ref{charprimo}. The absorption radical of an evolution algebra is described in graphical terms in Proposition~\ref{paella}. We finish Section~3 with a study of von Neumann regularity, and show that an evolution algebra is von Neumann regular if and only if every associated graph consists of isolated loops.

The centroid of an evolution algebra is the focus of Section~4. We show that the number of connected components in a graph associated to an evolution algebra can be read from the centroid. More precisely, for zero annihilator algebras, we show that the centroid of the algebra is isomorphic to the product, over the set of connected components of any graph associated to the algebra, of the base field, see Theorems~\ref{praiajurere} and \ref{praiaMole}. Consequently, since we have shown that prime algebras have only one component (Theorem~\ref{despierto}), we get that the centroid of a prime evolution algebra is isomorphic to the base field. In our analysis, we make use of a process of simultaneously diagonalization of a family of linear operators, a procedure that is interesting on its own (see the beginning of Section~4).

\section{Preliminaries }

Consider  a ring $S$ and an element $a \in S$. We define the \emph{left multiplication operator} $L_a \colon S \to S$ by $L_a(b) := ab$. Analogously, we define the \emph{right multiplication operator} $R_b$ for a $b \in S$. The condition for $S$ to be commutative is equivalent to $L_a = R_a$ for every $a \in S$.

An algebra $A$ over a field $K$ is considered an \emph{evolution algebra} if there exists a basis $B=\{e_i\}_{i\in \Lambda}$ such that $e_ie_j=0$ for every $i, j \in \Lambda$ with $i\neq j$. Such a basis is called a \emph{natural basis}. Denote by $M_B=(\omega_{ij})$ the \emph{structure matrix} of $A$ relative to $B$, where $e_i^2 = \sum_{j\in \Lambda} \omega_{ji}e_j$. For $x=\sum_{i \in \Lambda} \lambda_{i} e_i$ , the support of $x$ with respect to $B$ is denoted as $\text{supp}_B(x)=\{i \in \Lambda:\lambda_i\neq 0\}$. We will denote the basis just by $\sop(x)$ when the basis is clear. For a set $X$,  we define $\sop_B(X) = \cup_{x \in X} \sop_B(x)$.

We include a summary of definitions and results that we will apply to evolution algebras along this work:
\begin{enumerate}
    \item {\it Semiprime}: an algebra $A$ such that for any $I\triangleleft A$ with $I^2=0$ one has $I=0$.
   
    \item {\it Nondegenerate}: an algebra $A$ such that $(xA)x=0$ implies $x=0$. The pathological elements $x\in A$ verifying $(xA)x=0$ are called {\it absolute zero-divisors}.  The set of absolute zero divisors of $A$ will be denoted by $\azd(A)$.   These elements were introduced for alternative algebras (though without using this name) in \cite{Kleinfeld}. 
    Since then, they have been intensively used in the theory of Jordan algebra and others.  An algebra containing a nonzero absolute zero divisor is called {\it degenerate.} It is easy to see that if $A$ is nondegenerate, then it is semiprime.
    \item {\it Decomposable}: an algebra $A$ which splits in the form $A=I\oplus J$, where $I,J\triangleleft A$, $I,J\ne 0$.
    \item {\it Prime}: an algebra $A$ such that for any $I,J\triangleleft A$ with $IJ=0$ one has $I=0$ or $J=0$.  A proper ideal $I$ of $A$ is said a {\it prime ideal} if the quotient ring $A/I$ is prime.
    \item {\it Perfect}: an algebra verifying $A=A^2$.  For finite-dimensional evolution algebras, this is equivalent to $\vert M_B\vert \neq 0$. 
    \item {\it Von Neumann regular:} an element $x$ of an evolution algebra $A$ is said {\it 
    von Neumann regular} if there is $y\in A$ satisfying $xyx=x$ (note that $(xy)x=x(yx)$ so we use the notation $xyx$). An evolution algebra $A$ is said to be {\it von Neumann regular} if every element is von Neumann regular.
    \item {\it Annihilator: } if  $A$ is an algebra then we define the {\it annihilator of} $A$ as $\ann(A):=\{x \colon x A = A x=0 \}$. We say that $A$ is a {\it zero annihilator algebra} if $\ann(A)=0$. In case $A$ is an evolution algebra, we have $\ann(A)=\Span{\{e_i \colon e_i^2=0\}}$. Similarly, if $I$ is an ideal of an algebra $A$ then  $\ann_A(I):=\{x\in A\colon xI=Ix=0\}$. 
    \item {\it Absorption property: } an ideal $I$ of a commutative $K$-algebra $A$ satisfies  \textit{the absorption property} if $x A\subset I$ implies $x\in I$. The \textit{absorption radical} of $A$ is the intersection of all ideals of A having the absorption property, denoted by $\rad(A)$.   
\end{enumerate}

    \begin{remark}\rm \label{primeabsorption}
        Notice that, in particular, prime ideals have the absorption property.
    \end{remark}

A \emph{directed graph} is a $4$-tuple,  $E=(E^0, E^1, r_E, s_E)$,  
consisting of two disjoint sets $E^0$ and $E^1$, along with two maps
$r_E, s_E: E^1 \to E^0$. We will use the terms "graph" and "directed graph" interchangeably.  The elements of $E^0$ are called \emph{vertices}, and the elements of 
$E^1$ are called \emph{edges} of $E$.  For $f\in E^1$, $r_E(f)$ and $s_E(f)$ are 
 the \emph{range} and the \emph{source} of $f$, respectively.  
If the graph we consider is clear from the context, we write $r(f)$ and $s(f)$. A
vertex $v$ for which $s^{-1}(v)=\emptyset$  is called a \emph{sink}, while a vertex $v$ for which $r^{-1}(v)=\emptyset$ is called a \emph{source}.  A \emph{path} $\mu$ of length $m$ is a finite chain of edges $\mu=f_1\ldots f_m$ such that $r(f_i)=s(f_{i+1})$ for $i=1,\ldots,m-1$.  We denote by $s(\mu):=s(f_1)$ the source of $\mu$ and $r(\mu):=r(f_m)$ the range of $\mu$.  If $s(\mu)=r(\mu)$ then we say that $\mu$ is a \emph{closed path based on $s(\mu)$}. A path is \emph{simple} if $s(e_i)\neq s(e_j)$ for all $i\neq j$. We write $\mu^0$, the set of vertices of $\mu$.  
The vertices will be considered trivial paths, namely, paths of length zero. A {\it cycle} is a closed simple path based at any of its vertices, and a cycle of length one is called a {\it loop}. We say that two vertices $u$ and $v$ satisfy
$u \geq v$ if there is a path from $u$ to $v$. If $A \subset E^0$, we write $u\geq A$ if $u \geq v$ for some $v\in A$.
We say that the graph $E$ is \emph{downward directed}: for every $u,v\in E^0$, there exists $z\in E^0$ such that $u\geq z$ and $v \geq z$.

A graph $E$ is said to be {\em row-finite}  if,  for any $u\in E^0$, we have 
that $\vert s^{-1}(u)\vert$ is finite. Additionally, it is said to satisfy \emph{Condition (Sing)}  if there is at most one edge between two vertices.  

It is worth recalling the of a graph $E$ modulo a subset $S\subset E^0$. This is a new graph $F=(F^0,F^1,r_F,s_F)$ such that $F^0:=E^0\setminus S$, $F^1:=\{f\in E^1\colon s(f),r(f)\notin S\}$, and $r_F,s_F$ are the suitable restrictions of $r_E, s_E$. The graph $F$ is usually denoted $E/ S$.

Furthermore, a directed graph is \textit{strongly connected}  if, given two different vertices, there exists a path that connects the first vertex to the second. A subset $H$ of $E^{0}$ is called \textit{hereditary} if, whenever $v\in H$ and
$w\in E^{0}$ satisfy $v\geq w$, then $w\in H$.

Next, we recall the definition of  the $\top$ topology, which makes sense in any graph (see \cite{CMMSS}).   
  Let $E$ be a graph and define $c\colon\P(E^0)\to\P(E^0)$ as the map such that for any $S\subseteq E^0$ we have $c(S):=\{v\in E^0 \; \vert \; v\ge A\}$. This map defines a {\it Kuratowski closure operator}, that is, it satisfies: 
\begin{itemize}
\item[(i)] $c(\vacio)=\vacio$, 
\item[(ii)] $S\subseteq c(S)$, 
\item[(iii)] $c(S)=c(c(S))$, and 
\item[(iv)] $c(S\cup T)=c(S)\cup c(T)$, for any $S,T\subseteq E^0$. 
\end{itemize}
Every Kuratowski closure operator induces 
a topology in $E^0$ whose closed sets are those $S\subseteq E^0$ such that $S=c(S)$ (see \cite[Chapter III, Section 5, Theorem 5.1]{DU}).   The topology induced by the $c$ described above is called the $\top$ topology.
So, an open set $O$ is one for which there is a subset $S\subseteq E^0$ such that $O=\{v\in E^0\colon v\not\ge S\}$.

Firstly, for any vertex $u\in E^0$, we say that $u\overset{\tiny 0}{\equiv} u$. Now, for vertices $u,v\in E^0$  we use the notation 
$u\overset{\tiny 1}{\equiv} v$ for $1$-connection, that is, either there exists $f\in E^1$ such that $s(f)=u$ and $r(f)=v$ or $s(f)=v$ and $r(f)=u$. Also, we say that $u$ and $v$ are {\it connected} if and only if there is a collection of vertices $u_1,\ldots,u_n$ such that
$$u=u_1\overset{\tiny i_1}{\equiv}
u_2\overset{\tiny i_2}{\equiv}\cdots\overset{\tiny i_n}{\equiv}u_n=v$$
\noindent
where $i_j \in \{0,1\}$ for all $j \in \{1,\ldots,n\}$. In this case, we write $u\equiv v$. In the sequel,  we prove that the connected components of $E^0$ endowed with the $\top$ topology are the equivalence classes of $E^0$ for the equivalence relation $\equiv$.

\begin{proposition} Let $E$ be a graph. The connected components of $E^0$, endowed with the $\top$ topology, are the equivalence classes of $E^0$ for the equivalence  $\equiv$.    
\end{proposition}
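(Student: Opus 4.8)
The plan is to first re-describe the $\top$ topology in purely combinatorial terms, then to show that each $\equiv$-class is simultaneously open, closed, and connected, and finally to invoke the standard fact that a connected clopen set containing a point must be the connected component of that point.

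The first step is to unwind the closure operator $c$. Since $c(S)=\{v\in E^0 : v\ge S\}$, a set $S$ is closed exactly when it is closed under predecessors (if $v\ge w$ and $w\in S$, then $v\in S$), so that a complement $O$ is open iff $v\in O$ and $v\ge w$ imply $w\in O$. In other words, the open sets are precisely the hereditary subsets of $E^0$. I would record this equivalence, \emph{open $\Leftrightarrow$ hereditary}, as the working description of the topology, checking both inclusions directly from the definition of $c$ together with the fact that $v\ge v$ for every $v$ via trivial paths.

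Next I fix a vertex $u$ and write $[u]$ for its $\equiv$-class. The key observation is that a path $v=v_0\to v_1\to\cdots\to v_m=w$ forces $v\equiv w$, since consecutive vertices are $1$-connected. Hence $v\in[u]$ and $v\ge w$ give $w\in[u]$, so $[u]$ is hereditary and therefore open; and if $v\ge w$ with $w\in[u]$, then $v\in[u]$, whence $c([u])\subseteq[u]$ and $[u]$ is closed. Thus every $\equiv$-class is clopen (equivalently, the classes partition $E^0$ into open pieces, so each is also closed as the complement of a union of the remaining classes).

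It remains to prove that $[u]$ is connected, for which I would reduce to the case of a single edge. If there is an edge with source $x$ and range $y$ (so $x\ge y$ with $x\neq y$), consider the subspace $\{x,y\}$: any hereditary set containing $x$ must contain $y$, so $\{x\}$ is not open in $\{x,y\}$, and hence $\{x,y\}$ admits no separation into two nonempty disjoint open sets, i.e. it is connected; the symmetric argument applies when the edge runs from $y$ to $x$. Consequently any two $1$-connected vertices lie in a common connected component, and running along a $\equiv$-chain shows that every $v\in[u]$ lies in the connected component of $u$, so $[u]$ itself is connected. Finally, since $[u]$ is connected, clopen, and contains $u$, it must coincide with the connected component of $u$: a connected set meeting the clopen $[u]$ is contained in it, giving $\mathrm{Comp}(u)\subseteq[u]$, while connectedness of $[u]$ gives the reverse inclusion. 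I expect the only delicate points to be the translation \emph{open $\Leftrightarrow$ hereditary} and the subspace argument for two-point sets; everything else is a routine assembly of these facts.
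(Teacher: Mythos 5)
Your proof is correct and follows essentially the same strategy as the paper's: show each $\equiv$-class is clopen and connected, then identify it with the component via the standard clopen argument. The only cosmetic difference is that you establish connectedness of $[u]$ by reducing to two-point subspaces $\{x,y\}$ attached to edges and chaining, whereas the paper argues directly against a putative separation $[u]=C_1\cup C_2$ using the $\overset{\tiny 1}{\equiv}$-chain; your explicit dictionary \emph{open $\Leftrightarrow$ hereditary} is a nice clarification but not a change of method.
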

\begin{proof}

Let $S$ be a connected component of the topological space $(E^0,\top)$. Take $u\in S$. We are going to prove that $S=[u]$, where $[u]:=\{z\in E^0\colon z\equiv w\}$. Firstly, for any $w\in E^0$, we have that $[w]$ is closed in the $\top$ topology since $c([w])=[w]$. Also,  $[w]$ is open because $c(E^0\setminus [w]) = E^0\setminus [w]$. Indeed, if $v \in c(E^0  \setminus [w])$ then there is a vertex $q \in E^0  \setminus [w]$ such that $v \geq q$, which implies that $v \in [q] \neq [w]$, and so $v \in E^0 \setminus [w]$.  Next, we show that $S\subseteq [u]$. For this, observe that $S=(S\cap [u] )\cup (S\cap [u]^c)$. Since $[u]$ is clopen, $S$ is connected, and $S\cap [u]\neq \emptyset$, we conclude that $S\cap [u]^c = \emptyset$ and hence $S\subseteq [u]$ as desired. Now, we prove that $[u]\subseteq S$. Let us consider that $[u] = C_1 \cup C_2$ with $c(C_i) = C_i$ and $C_1\neq C_2$. Take vertices $w \in C_1$ and $w' \in C_2$. Since $w,w' \in [u]$, we have that there is a collection of vertices in $[u]$ such that $w = v_0 \overset{\tiny 1}{\equiv} v_1 \overset{\tiny 1}{\equiv} v_2 \overset{\tiny 1}{\equiv} \cdots \overset{\tiny 1}{\equiv} v_{n-1}\overset{\tiny 1}{\equiv} v_n = w'$. In this sequence, necessarily there must be $v_i \in C_1$ and $v_{i+1} \in C_2$, with $v_i \geq v_{i+1}$ or $v_{i+1} \geq v_i$. Without loss of generality, suppose that $v_i \geq v_{i+1}$. Then, $v_i \in c(C_2) = C_2$ and $C_1 \cap C_2 \neq \emptyset$ and $[u]$ is connected in the $\top$ topology. In consequence $[u] = S$.

Finally, since topological connected components are equivalence classes, the proposition statement follows.

\end{proof}

For any graph $E$ and a field $K$, we can construct the $K$-algebra $C(E^0,K)$ of all continuous maps $f\colon E^0\to K$, where $E^0$ is endowed with the $\top$ topology and $K$ with the discrete one. A map $f\in C(E^0,K)$ is constant along a connected component $S$ of $E^0$, because $S=\sqcup_{\lambda\in f(S)} (f^{-1}(\lambda)\cap S)$ and each $f^{-1}(\lambda)\cap S$ is open. Thus, $\vert f(S)\vert=1$. 
\begin{remark}\label{isomap} \rm
   Denote by $\mathfrak{C}$ the set of connected components of the graph $E$. Notice that the algebra $C(E^0,K)$ is isomorphic to the $K$-algebra $K^\mathfrak{C}$ of maps $\mathfrak{C}\to K$, where  $K^{\mathfrak{C}}$ is endowed with pointwise operations. 
\end{remark}

We will see later that the centroid of an evolution algebra contains the algebra $C(E^0,K)$, see Lemma~\ref{atarazana}. If the algebra has zero annihilator, then the monomorphism of Lemma~\ref{atarazana} is an isomorphism, see Lemma~\ref{isomega}.

Finally, we associate a graph to an evolution algebra in a similar way to what is done in \cite{Elduque2}: if $A$ is an evolution algebra with natural basis $B=\{e_i\}_{i\in \Lambda}$ and $M_B=(\omega_{ij})$, then we denote by $E=(E^0, E^1, r_E, s_E)$ the \emph{directed graph associated to $A$ relative to $B$}, where $E^0=\{e_i\}_{i \in \Lambda}$ and 
we draw an edge from $e_i$ to $e_j$ if and only if $\w_{ji}\ne 0$. Observe that a directed graph associated to an evolution algebra is row-finite and satisfies the Condition (Sing).  Notice that the graph associated to an evolution algebra may change with a change of basis. In the two-dimensional case, this is dealt with by looking at "squares" see \cite{squares}. Generally, when we mention the graph associated to an evolution algebra, we assume that a basis is fixed.

For an ideal $I$, of an evolution $K$-algebra $A$ with natural basis $B=\{e_i\}_{i \in \Lambda}$ and associated directed graph $E$, denote by $H_I$ the subset of $E^0$ given as 
$H_I:=\{e_i\in E^0\colon e_i^2\in I\}$. For any hereditary subset  $H\subset E^0$, define the subspace $I_H:=\oplus_{e_i\in H} K e_i =\Span{(H)}$ where $\Span{(H)}$ is the  set of all linear combinations of the elements of $H$. Notice that $I_{\emptyset}=\{0\}$.

\section{Graphical characterization of algebraic properties}

 In this section, we focus on characterizing algebraic properties of an evolution algebra in terms of an associated graph whenever possible. Moreover, we give two algorithms to analyze if an evolution algebra is nondegenerate and/or semiprime. We begin with the following result regarding the annihilator and absolute zero divisors of an evolution algebra.

\begin{proposition}\label{ostra}
Let $A$ be an evolution algebra with associated graph $E$ relative to the natural basis $B=\{e_i\}$. The following statements hold.
\begin{enumerate}[\rm (i)]
    \item \label{otro} $A$ is a zero annihilator algebra if and only if $E$ is sinkless. 

\item\label{jacare}
If $A$ is a semiprime, then it is a zero annihilator algebra. 

\item\label{papaya}
If $E^0$ has a subset $S\subsetneq E^0$ such that the graph $E/ S$ has a sink, then $A$ has nonzero absolute zero divisors, and hence, it is degenerate. Consequently, if $A$ is nondegenerate, then $E$ is sinkless (take $S=\emptyset$), and $A$ is a zero annihilator algebra. 

\item\label{acara} If the graph $E$ contains a vertex that is not the basis of a loop, then $A$ is degenerate. \end{enumerate}    

\end{proposition}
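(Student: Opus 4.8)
The plan is to translate each graph-theoretic hypothesis into a statement about the structure constants $\omega_{ji}$ and then read the algebraic conclusion directly off the definition of an evolution algebra. The single fact I will use throughout is that, by construction of the associated graph, there is an edge $e_i\to e_j$ exactly when $\omega_{ji}\neq 0$; consequently $e_i$ is a sink if and only if $e_i^2=\sum_{j}\omega_{ji}e_j=0$. With this dictionary in hand, \eqref{otro} is immediate: combining it with the given description $\ann(A)=\Span\{e_i\colon e_i^2=0\}$, the graph $E$ is sinkless precisely when no basis vector squares to zero, i.e.\ precisely when $\ann(A)=0$, which is the zero annihilator condition.

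For \eqref{jacare} I would argue purely algebraically (independently of the graph). The set $\ann(A)$ is always an ideal, and since $\ann(A)\cdot A=0$ we have $\ann(A)^2=0$; semiprimeness then forces $\ann(A)=0$, which is exactly the zero annihilator property.

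The heart of the matter is \eqref{papaya} and \eqref{acara}, which both rest on one short computation. For a basis vector $e_k$ we have $e_kA=Ke_k^2$ (because $e_ke_j=0$ for $j\neq k$), and
$$
e_k^2\,e_k=\Big(\sum_{m}\omega_{mk}e_m\Big)e_k=\omega_{kk}\,e_k^2,
$$
again using $e_me_k=0$ for $m\neq k$. Hence $(e_kA)e_k=K\,\omega_{kk}\,e_k^2$, and $e_k$ is a nonzero absolute zero divisor as soon as $\omega_{kk}=0$. For \eqref{acara}, ``$e_k$ is not the basis of a loop'' says precisely that there is no edge $e_k\to e_k$, i.e.\ $\omega_{kk}=0$, so $e_k\in\azd(A)\setminus\{0\}$ and $A$ is degenerate. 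For \eqref{papaya}, if $e_k$ is a sink of $E/S$ with $e_k\notin S$, then every out-neighbour of $e_k$ in $E$ must lie in $S$; since $e_k\notin S$ the loop $e_k\to e_k$ is impossible, so $\omega_{kk}=0$ and the same computation yields $e_k\in\azd(A)\setminus\{0\}$. Taking $S=\emptyset$ recovers the stated consequence: if $A$ is nondegenerate then $E$ has no sink, and by \eqref{otro} the algebra has zero annihilator.

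The step I would watch most carefully is the bookkeeping in \eqref{papaya}: one must verify that ``being a sink of $E/S$'' unwinds to ``\emph{all} out-neighbours lie in $S$'' (so that $e_k\notin S$ genuinely rules out the loop and forces $\omega_{kk}=0$), rather than to some weaker condition about the deleted edges. I would also note the borderline case $e_k^2=0$, where $e_k$ is already a sink of $E$ itself; there $e_kA=Ke_k^2=0$, so $e_k$ is still a nonzero absolute zero divisor and degeneracy persists. Everything else is a direct substitution of the structure constants.
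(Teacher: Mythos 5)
Your proof is correct and follows essentially the same route as the paper: the same dictionary between sinks/loops and the structure constants, and the same one-line computation $(e_kA)e_k=K\,\omega_{kk}\,e_k^2$ at the core of items (iii) and (iv). The only cosmetic differences are that you handle (ii) via the general fact that $\ann(A)$ is an ideal with $\ann(A)^2=0$ (the paper instead exhibits the single zero-square ideal $Ke_i$ for a sink $e_i$), and you derive (iii) from the loop-free criterion underlying (iv), whereas the paper proves (iii) directly and obtains (iv) by taking $S=E^0\setminus\{e_i\}$; both directions are equally valid.
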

\begin{proof}
The first item is clear. Let us prove (\ref{jacare}): A graph $E$ associated to $A$ does not have sinks since otherwise there is a vertex $e_i$ with $e_i^2=0$. Then, the ideal $(e_i)=K e_i$ satisfies $(e_i)^2=0$, a contradiction. Thus, $E$ is sinkless, so $A$ is a zero annihilator algebra.
For item (iii), take $e_i\in E^0\setminus S$ a sink and observe that $e_i^2$ is in the linear span of $S$. So, as $e_i \notin S$, we have that $e_i e_i ^2=0$, and $e_i$ is a nonzero absolute zero divisor.  For item (iv), let $e_i$ be a vertex that is not the basis of a loop and consider $S=E^0\setminus\{e_i\}$. Then, applying (iii), we get that $A$ is degenerate.
\end{proof}

Note that the converse of the Proposition \ref{ostra} (\ref{acara}) is not valid in general as we will show in the following example.

\begin{example}\label{ejemplonode}\rm
Consider the evolution algebra $A$  with natural basis $B=\{e_i\}_{i=1}^2$  and multiplication defined by $e_1^2=e_1+e_2$, $e_2^2=-(e_1+e_2)$. The algebra is degenerate since the element $e_1+e_2$ is an absolute zero divisor. The graph of the algebra relative to the basis $B$ is the complete graph of order $2$: each vertex connects to itself and the other. We depict the graph below.

\begin{equation*} 
   E:   \xymatrix{
     & {\bullet}_{e_{2}}\ar@(ul,ur)\ar@/^/[r]  & {\bullet}_{e_{1}} \ar@(ul,ur) \ar@/^/[l]  &    \\
           }
           \end{equation*}

 We will prove that any graph associated to $A$ consists of the complete graph of order two, i.e., that the graph is invariant under a change of basis of the algebra. To see this, we 
first, make a general consideration: if $B$ is a natural basis of an evolution algebra, and we multiply each element in $B$ by a nonzero scalar, then we get another natural basis, say $B'$, and  the graphs associated to $B$ and $B'$ are isomorphic. 

Let $\{u_1,u_2\}$ be another natural basis of $A$ and write $u_1=\a e_1+\b e_2$, $v=\gamma e_1+\delta e_2$, where $\a,\b,\gamma,\delta\in K$. Then, we have that $\a \delta-\b \gamma\ne 0$ and $0=u_1v=(\a \gamma-\b \delta)e_1^2$, which gives $\a \gamma=\b \delta$ (since $e_1^2\ne 0$). So, we have that 
$$\begin{cases}\a \delta-\b \gamma\ne 0\cr \a \gamma-\b \delta=0.\end{cases}$$
In case $\gamma=0$, we have $\a ,\delta\ne 0$ hence $\b =0$. So, $u_1=\a e_1$ and $u_2=\delta e_2$. In this case, the new natural
basis has (up to isomorphism) the same graph as the old natural basis. Thus, assume $\gamma\ne 0$ and
put $\a =\b \delta/\gamma$. Then,  
$$\begin{cases}u_1=\frac{\b \delta}{\gamma}e_1+\b e_2\cr u_2=\gamma e_1+\delta e_2.\end{cases}$$
Notice that $\b\ne 0$, since if $\b =0$ then $u_1=0$ which is not possible. Therefore, the basis $\{u_1,u_2\}$ induces the same graph (always up to isomorphism) that $\{\frac{\gamma}{\b}u_1,u_2\}$. So, we can directly consider
the basis $$\begin{cases}u_1=\delta e_1+\gamma e_2\cr u_2=\gamma e_1+\delta e_2.\end{cases}$$
We highlight that $\delta^2\ne \gamma^2$ because $\{u_1,u_2\}$ is a basis. From here we get that 
$$e_1=\frac{1}{\delta^2-\gamma^2}(\delta u_1-\gamma u_2),\quad e_2=\frac{1}{\delta ^2-\gamma^2}(\delta u_2-\gamma u_1).$$
Finally, to find the graph of the algebra relative to $\{u_1,u_2\}$, we compute
$$u_1^2=(\delta-\gamma)(u_1+u_2)$$
$$u_2^2=(\gamma-\delta)(u_1+u_2)$$
and, since $\gamma\ne \delta$, the graph of the algebra relative to $\{u_1,u_2\}$ is also the complete graph of order $2$. 

\end{example}

    The following example proves that semiprimeness does not imply the absence of nonzero absolute zero divisors.
It also illustrates that having a zero annihilator does not imply nondegeneracy.

\begin{example}\rm Consider the evolution algebra $A$, over a field of characteristic other than two, with basis $B=\{e_i\}_{i=1}^4$  and multiplication $e_1^2 =   e_3+e_4, \
       e_2^2 =   -2e_3-2e_4, \
       e_3^2 =  e_1+e_2, \
       e_4^2 =  -e_1-e_2.$
The graph associated to $A$ is pictured below.
\begin{equation}
\xymatrix{
 \bullet^{e_1} \ar@/^/[r] \ar@/_/[d]
  &\bullet^{e_3} \ar@/_/[d] \ar@/^/[l] \\
{\bullet}_{e_4} \ar@/_/ [r] \ar@/_/[u] & \bullet_{e_2} \ar@/_/[u]\ar@/_/[l] }
\end{equation}
This algebra has zero annihilator because its graph is sinkless and is degenerate by Proposition~\ref{ostra} item~\eqref{papaya}. Furthermore, it is semiprime. Indeed, if $I$ is a nonzero ideal of $A$ then, by \cite[Lemma~3.2(2)]{conecting}, $H_I$ is a hereditary subset of $E^0$. Since the hereditary sets of this graph are $\emptyset$ and $E^0$, we have that either $H_I=\emptyset$ or $H_I=E^0$. In the first case, by \cite[Proposition~3.4(4)]{conecting}, we obtain that $I \subset I_{H_I} = I_\emptyset=0$, a contradiction. In the second case, $H_I=E^0$ implies that $A^2 \subset I$. In particular,  $ 0 \neq (e_3^2)^2=(e_1+e_2)^2 \in I^2 $ since $e_3^2\in A^2 \subset I$. Hence, $A$ is semiprime.

 \end{example}

There are algebraic properties, such as semiprimeness, that cannot be characterized in graphical terms. Indeed, consider the evolution algebras $A$ and $A'$, over a field of characteristic other than two, with natural basis $B=\{e_i\}_{i=1}^2$ and $B'=\{u_i\}_{i=1}^2$ respectively,  such that $e_1^2=e_1+e_2$, $e_2^2=-(e_1+e_2)$, $u_1^2=u_1+u_2$, $u_2^2=u_1-u_2$. Both have the same associated graph, but $A$ is not semiprime while $A'$ is. The same phenomenon happens with the perfection property: one of the algebras is perfect while the other is not. In conclusion:
\begin{proposition}\label{nocaracsemiprime}
 There is no graphical characterization of semiprimeness for evolution algebras. The same holds true for perfection.   
\end{proposition}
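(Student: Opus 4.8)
The plan is to prove both non-characterizability claims simultaneously with the pair of algebras $A$ and $A'$ already displayed just before the statement. The logical skeleton is elementary: if semiprimeness (resp.\ perfection) were a property that could be read off from an associated graph, then any two evolution algebras admitting a common associated graph would have to agree on that property. Hence it suffices to check that $A$ and $A'$ share an associated graph while \emph{disagreeing} on semiprimeness, and likewise disagreeing on perfection. As a first step I would record the structure matrices $M_B=\left(\begin{smallmatrix}1&-1\\1&-1\end{smallmatrix}\right)$ and $M_{B'}=\left(\begin{smallmatrix}1&1\\1&-1\end{smallmatrix}\right)$, and observe that in both every entry (diagonal and off-diagonal) is nonzero; by the edge rule $e_i\to e_j \iff \omega_{ji}\neq 0$, both associated graphs are the complete graph of order two, i.e.\ a loop at each vertex together with one edge in each direction. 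Thus $A$ and $A'$ genuinely have the same associated graph.

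Perfection is the quickest half. By the finite-dimensional criterion recalled in the preliminaries, perfection is equivalent to $|M_B|\neq 0$. Since $|M_B|=0$ while $|M_{B'}|=-2\neq 0$ over a field of characteristic different from two, the algebra $A$ is not perfect whereas $A'$ is. This already settles the perfection assertion.

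For semiprimeness I would treat the two algebras separately. For $A$, the element $e_1+e_2$ (the absolute zero divisor of Example~\ref{ejemplonode}) spans a one-dimensional ideal $I=K(e_1+e_2)$: indeed $(e_1+e_2)e_1=e_1+e_2$ and $(e_1+e_2)e_2=-(e_1+e_2)$, so $I$ is closed under multiplication, and $(e_1+e_2)^2=e_1^2+e_2^2=0$ gives $I^2=0$ with $I\neq 0$; hence $A$ is not semiprime. For $A'$ I would argue through hereditary subsets as in the preceding example: the common graph is strongly connected, so its only hereditary subsets are $\emptyset$ and $E^0$. Given a nonzero ideal $I\triangleleft A'$, the set $H_I$ is hereditary by \cite[Lemma~3.2(2)]{conecting} and cannot be $\emptyset$ (otherwise $I\subseteq I_{H_I}=I_{\emptyset}=0$ by \cite[Proposition~3.4(4)]{conecting}), so $H_I=E^0$. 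Then $(A')^2\subseteq I$, and combining with the perfection $A'=(A')^2$ already proved yields $I=A'$, whence $I^2=(A')^2=A'\neq 0$. Therefore no nonzero ideal of $A'$ squares to zero, and $A'$ is semiprime.

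Finally I would conclude: $A$ and $A'$ share an associated graph, yet $A$ is neither semiprime nor perfect while $A'$ is both, which rules out any purely combinatorial characterization of either property. The only step demanding real care is the semiprimeness of $A'$; I expect the mild obstacle there to be invoking the ideal/hereditary-subset dictionary of \cite{conecting} correctly, though for this two-dimensional algebra one could alternatively verify semiprimeness by directly enumerating its ideals, since the argument in fact shows the only ideals of $A'$ are $0$ and $A'$.
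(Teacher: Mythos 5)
Your proposal is correct and follows essentially the same route as the paper: the paper's proof consists precisely of exhibiting the same pair $A$, $A'$ (with $e_1^2=e_1+e_2$, $e_2^2=-(e_1+e_2)$ and $u_1^2=u_1+u_2$, $u_2^2=u_1-u_2$), which share the complete graph of order two while disagreeing on both semiprimeness and perfection. The only difference is that the paper asserts the relevant facts without verification, whereas you supply the details (the square-zero ideal $K(e_1+e_2)$, the determinant computation for perfection, and the hereditary-subset argument for the semiprimeness of $A'$), all of which check out.
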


\begin{remark}\rm
Perfection does not imply nondegeneracy. For example, we can consider the evolution algebra with natural basis $B=\{e_1, e_2\}$ and multiplication given by $e_1^2 = e_2$ and $e_2^2 = e_1+e_2$. Observe that $e_1$ is an absolute zero divisor.
\end{remark}

In studying evolution algebras, it is usual to separate the perfect and non-perfect cases. Under the assumption of perfection of $A$, we can sharpen the conditions of nondegeneracy given in Proposition~\ref{ostra}. We do it in two ways: via graphs and via the structure matrix of the algebra.

\begin{proposition}\label{colorida}
    Let  $A$ be a perfect evolution algebra with natural basis $\{e_i\}_{i\in\Lambda}$ and let $E$ be the associated graph. Then, $A$ has a nonzero absolute zero divisor if and only if there is $S\subsetneq E^0$ such that the graph $E/ S$ consists of sinks. 
\end{proposition}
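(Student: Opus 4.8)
The plan is to establish the two implications separately; only the forward one will use perfection. For the backward implication, suppose there is a proper subset $S\subsetneq E^0$ such that $E/S$ consists of sinks. Since $S\subsetneq E^0$ we have $E^0\setminus S\neq\emptyset$, so $E/S$ is a nonempty graph all of whose vertices are sinks; in particular $E/S$ has a sink. Proposition~\ref{ostra}(\ref{papaya}) then produces a nonzero absolute zero divisor of $A$. Observe that this direction needs no hypothesis on $A$ beyond being an evolution algebra.

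For the forward implication I would argue by a direct computation in the natural basis. Let $x=\sum_{i}\lambda_i e_i$ be a nonzero absolute zero divisor and put $T=\sop(x)$, so that $\lambda_i\neq 0$ exactly when $i\in T$. Since $xe_k=\lambda_k e_k^2$, the products $xa$ (for $a\in A$) span $\Span\{e_k^2\colon k\in T\}$, and hence the condition $(xA)x=0$ is equivalent to $e_i^2x=0$ for every $i\in T$. Expanding $e_i^2=\sum_j\omega_{ji}e_j$ and using $e_je_k=0$ for $j\neq k$, one computes $e_i^2x=\sum_{j\in T}\omega_{ji}\lambda_j e_j^2$. Thus the hypothesis becomes the family of relations $\sum_{j\in T}\omega_{ji}\lambda_j e_j^2=0$, one for each $i\in T$.

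The key step is to feed perfection into these relations. Perfection means $A=A^2=\Span\{e_k^2\}$, equivalently (in the finite-dimensional case) $|M_B|\neq 0$; this says the linear operator $e_k\mapsto e_k^2$ is invertible, so $\{e_k^2\}$ is linearly independent, and in particular $\{e_j^2\colon j\in T\}$ is independent. The relations above then force $\omega_{ji}\lambda_j=0$ for all $i,j\in T$, and since $\lambda_j\neq 0$ for $j\in T$ we conclude $\omega_{ji}=0$ for all $i,j\in T$. Taking $S:=E^0\setminus T$, which is proper because $x\neq 0$ gives $T\neq\emptyset$, the graph $E/S$ has vertex set $T$ and an edge from $e_i$ to $e_j$ precisely when $\omega_{ji}\neq 0$; since no such edge survives, $E/S$ has no edges at all and therefore consists of sinks, as required.

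I expect the linear-independence step to be the crux: the entire forward direction rests on converting perfection into injectivity of the squaring operator $e_k\mapsto e_k^2$ (equivalently $|M_B|\neq 0$), which is exactly what turns the vanishing of $e_i^2x$ into the scalar equations $\omega_{ji}\lambda_j=0$. The remaining manipulations are routine bookkeeping with the multiplication table of the natural basis.
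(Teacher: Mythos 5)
Your argument is correct and follows essentially the same route as the paper's own proof: the backward implication is exactly Proposition~\ref{ostra}(iii), and the forward implication expands $(xA)x=0$ in the natural basis, uses the linear independence of the squares $\{e_j^2\}_{j\in\sop(x)}$ (extracted from perfection, just as in the paper) to force $\omega_{ji}=0$ for $i,j\in\sop(x)$, and then takes $S=E^0\setminus\sop(x)$. The only cosmetic difference is that you reformulate $(xA)x=0$ as $e_i^2x=0$ for each $i\in\sop(x)$ instead of computing $(xe_i)x$ directly, which amounts to the same bookkeeping.
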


\begin{proof}
      If the set $S$ exists, then we have proved in Proposition~\ref{ostra}  that $A$ has nonzero absolute zero divisors. 
   Conversely, assume that there is $0\neq x\in A$ with $(xA)x=0$. Write $x= \sum_{t \in \Gamma} x_t e_t$, where $\Gamma := \sop(x)$ ($\Gamma^c := \Lambda \setminus\Gamma$). Let $i\in\Gamma$. Then, $(xe_i)x=0$ but $xe_i=x_ie_i^2=\sum_{q \in \Omega_i} x_i\w_{qi}e_q$, where $\Omega_i:=\sop(e_i^2)$. Hence 
   $$(xe_i)x = \sum_{q \in \Omega_i} x_i \w_{qi}e_q \sum_{t \in\Gamma} x_t e_t = \sum_{q \in \Omega_i\cap \Gamma} x_ix_q \w_{qi} e_q^2.$$
If $\Omega_i\cap \Gamma \neq \emptyset$ then, since perfection of $A$ implies that the set $\{e_q^2\}_{q\in \Omega_i\cap \Gamma}$ is linearly independent, we obtain that $x_ix_q\w_{qi}=0$ for any $i \in \Gamma $ and $q\in \Omega_i\cap \Gamma$. This implies that $\omega_{qi}=0$ for any $i \in \Gamma $ and $q\in \Omega_i\cap \Gamma$, because  $x_ix_q \neq 0$ for any $i \in \Gamma $ and $q\in \Omega_i\cap \Gamma$. Therefore, for $i \in \Gamma$, we have that
$$e_i^2= \sum_{q \in \Omega_i} \w_{qi}e_q= \sum_{q \in \Omega_i\cap \Gamma } \w_{qi}e_q + \sum_{q \in \Omega_i\cap \Gamma^c}  \w_{qi}e_q =  \sum_{q \in \Omega_i\cap \Gamma^c}  \w_{qi}e_q.$$ 
Observe that the equality above implies that  $\Omega_i = 
\sop(e_i^2) \subseteq \sop(x)^c = \Gamma^c$ for every $i \in \Gamma$. Define $S=\{e_t\in E^0 \colon t \in \Gamma^c\}$ and note that $S\ne E^0$, because $x\ne 0$. For the graph $E/ S$, we have: if $e_i\in E^0\setminus S$, then $e_i$ does not connect to any vertex in $E^0\setminus S$, and hence this graph consists of sinks.
\end{proof}

\subsection{Degeneracy and semiprimeness via matrices}\label{jabutiaqui} In terms of matrices, we provide a characterization of the degeneracy of perfect evolution algebras. Furthermore, we include two algorithms to verify if an evolution algebra is nondegenerate and/or semiprime.

\begin{proposition}
    Let $A$ be a perfect evolution algebra with natural basis $B=\{e_i\}_{i \in \Lambda}$ and structure matrix $M_B$. Then, $A$ is nondegenerate if and only if no submatrix $(\omega_{ij})_{i,j\in\Phi}$ of $M_B=(\omega_{ij})_{i,j\in\Lambda}$ is zero.
\end{proposition}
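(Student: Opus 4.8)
The plan is to deduce this matrix criterion directly from the graph-theoretic characterization of degeneracy already obtained in Proposition~\ref{colorida}, by translating the condition ``$E/S$ consists of sinks'' into the vanishing of a principal submatrix of $M_B$. The entire argument is essentially a change of language between the quotient graph and the structure matrix, so I expect no deep difficulty; the only thing requiring care is the index bookkeeping, namely matching the source/range convention for edges with the row/column convention for $M_B$.

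First I would fix the dictionary between $E$ and $M_B$: by the definition of the associated graph, there is an edge from $e_i$ to $e_j$ exactly when $\omega_{ji}\neq 0$. Given a proper subset $S\subsetneq E^0$, set $\Phi:=\{i\in\Lambda\colon e_i\notin S\}$, which is nonempty precisely because $S$ is proper, and let $E/S$ be the quotient graph, whose vertices are indexed by $\Phi$ and which retains only those edges with both source and range outside $S$. The key observation is then that ``$E/S$ consists of sinks'' is equivalent to the vanishing of the principal submatrix $(\omega_{ij})_{i,j\in\Phi}$. Indeed, a vertex $e_i$ with $i\in\Phi$ is a sink of $E/S$ iff it has no outgoing edge in $E/S$, i.e.\ iff $\omega_{ji}=0$ for every $j\in\Phi$; imposing this for all $i\in\Phi$ gives $\omega_{ji}=0$ for all $i,j\in\Phi$, and since $\{(i,j)\colon i,j\in\Phi\}=\{(j,i)\colon i,j\in\Phi\}$, this is literally the statement that $(\omega_{ij})_{i,j\in\Phi}=0$.

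With this translation in hand the proposition is immediate. By Proposition~\ref{colorida}, the perfect algebra $A$ has a nonzero absolute zero divisor if and only if there is a proper $S\subsetneq E^0$ with $E/S$ consisting of sinks, which by the previous paragraph happens if and only if some principal submatrix $(\omega_{ij})_{i,j\in\Phi}$, with $\Phi$ a nonempty subset of $\Lambda$, is zero. Negating both sides yields that $A$ is nondegenerate if and only if no submatrix $(\omega_{ij})_{i,j\in\Phi}$ of $M_B$ vanishes, as claimed. (One may note for consistency that the case $\Phi=\Lambda$ corresponds to $S=\emptyset$ and would require $M_B$ itself to be zero, which perfection forbids for a nontrivial algebra, so the extremal case causes no trouble.)
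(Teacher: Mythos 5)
Your proof is correct, but it takes a genuinely different route from the paper's. You obtain the matrix criterion as a formal corollary of Proposition~\ref{colorida}, via the dictionary ``there is an edge from $e_i$ to $e_j$ in $E/S$ iff $i,j\in\Phi$ and $\omega_{ji}\neq 0$,'' under which ``$E/S$ consists of sinks'' becomes exactly $(\omega_{ij})_{i,j\in\Phi}=0$ with $\Phi$ the (nonempty) complement of $S$; the translation is carried out correctly, including the matching of properness of $S$ with nonemptiness of $\Phi$, and there is no circularity since Proposition~\ref{colorida} is proved independently of the matrix statement. The paper instead proves the matrix statement from scratch: it takes a nonzero absolute zero divisor $x$ with support $\Gamma$, computes $(xe_i)x=\sum_{j\in\Omega_i\cap\Gamma}x_ix_j\omega_{ji}e_j^2$, uses perfection (linear independence of the relevant $e_j^2$) to force $\omega_{ji}=0$ for all $i,j\in\Gamma$, and gives a direct converse computation showing that every element of $\Span(\{e_t\colon t\in\Phi\})$ is an absolute zero divisor when the principal submatrix on $\Phi$ vanishes. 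The mathematical content is the same---the paper's computation is essentially a rerun of the proof of Proposition~\ref{colorida} in matrix notation---so your reduction is the more economical presentation, at the cost of making the matrix statement logically dependent on the graph-theoretic one rather than self-contained.
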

\begin{proof}  
Let  $0\ne x\in A$ be such that $(xA)x=0$. Equivalently, $(xe_i)x=0$ for every $i\in \Lambda$. Write  $x=\sum_{t\in \Gamma} x_t e_t$, with $\Gamma := \sop(x)$. Considering  $i\in \Gamma$, let $\Omega_i:=\sop(e_i^2)$. We have that $xe_i=\sum_{t\in \Gamma} x_te_t e_i=x_ie_i^2=\sum_{r \in \Omega_i} x_i\omega_{ri} e_r$  and $0=(xe_i)x=\sum_{r \in \Omega_i,j\in \Gamma} x_i\omega_{ri} e_r x_j e_j=\sum_{j\in \Omega_i\cap \Gamma}
x_ix_j\omega_{ji} e_j^2$. If $\Phi_i:=\Omega_i\cap \Gamma=\emptyset$ then $\omega_{ji}=0$ for every  $j \in \Gamma$. Otherwise, since $\{e_j^2\}_{j\in \Omega_i\cap \Gamma}$ is a linearly independent set, we conclude that $x_ix_j\omega_{ji}=0$ for every  $j \in \Phi_i$ and consequently $\w_{ji}=0$ for every $j \in \Phi_i$ and $i \in \Gamma$.

Conversely, if $(\omega_{ij})_{i,j\in\Phi}=0$ for certain $\emptyset \neq \Phi \subset \Lambda$, then any $x=\sum_{t\in\Phi}x_t e_t$ 
 is an absolute zero divisor (which can be chosen nonzero).  Indeed, for any $i\in\Phi$ and $\Omega_i:=\sop(e_i^2)$,  we have 
 $$(xe_i)x=(x_ie_i^2)\sum_{t\in\Phi} x_t e_t=
 \sum_{t\in \Phi \cap \Omega_i}x_ix_t\w_{ti}e_t^2=0.$$
 
\end{proof}

We can give a characterization of nondegeneracy in terms of the structure matrix $(\w_{ij})_{i,j=1}^n$ of a finite-dimensional evolution algebra: we introduce auxiliary variables $x_1,\ldots,x_n$ in the columns of the structure matrix as follows:
\begin{equation}\label{elex}
    N:=\begin{pmatrix}
    x_1\w_{11} & x_2\w_{12} & \cdots & x_i\w_{1i} & \cdots\\
    x_1\w_{21} & x_2\w_{22} & \cdots & x_i\w_{2i} & \cdots\\
    \vdots & \vdots & \vdots & \vdots & \vdots\\
    x_1\w_{n1} & x_2\w_{n2} & \cdots & x_i\w_{ni} & \cdots\\
\end{pmatrix}
\end{equation}
Then we collect all the polynomials $p_j=p_j(x_1,\ldots,x_n)$ given by the entries of $N^2$. If the system of all $p_j=0$ has a nonzero solution the algebra is degenerate and each nonzero solution $(a_1,\ldots,a_n)$ of the system provides the coordinates of a nonzero absolute zero divisor $\sum_i a_ie_i$. For instance,
consider the evolution $\mathbb{Q}$-algebra whose matrix relative to a natural basis $\{e_1,\ldots,e_4\}$ is 
$$\tiny\left(
\begin{array}{cccc}
 1 & 0 & 1 & -1 \\
 0 & 1 & 1 & 1 \\
 1 & 1 & 2 & 0 \\
 1 & 1 & 2 & 0 \\
\end{array}
\right).$$
Then $$\tiny N=\left(
\begin{array}{cccc}
 x & 0 & z & -t \\
 0 & y & z & t \\
 x & y & 2 z & 0 \\
 x & y & 2 z & 0 \\
\end{array}
\right),\ N^2=\left(
\begin{array}{cccc}
 -t x+x^2+x z & y z-t y & -2 t z+x z+2 z^2 & -t x \\
 t x+x z & t y+y^2+y z & 2 t z+y z+2 z^2 & t y \\
 x^2+2 x z & y^2+2 y z & x z+y z+4 z^2 & t y-t x \\
 x^2+2 x z & y^2+2 y z & x z+y z+4 z^2 & t y-t x \\
\end{array}
\right),
$$
and equating $N^2=0$ we get $x=y=z=0$ (the case $t=0$ implies the trivial solution) so that there is a nonzero solution $(0,0,0,1)$. We conclude that the algebra is degenerate and furthermore, $e_4$ is an absolute zero divisor. The algorithm is as follows:
for a commutative algebra $A$ the absolute zero divisors are those elements $x\in A$ such that $L_x^2=0$, but the matrix of $L_x$ for an $n$-dimensional evolution algebra with structure matrix $(\w_{ij})$ is \eqref{elex} (being 
$x=\sum_i x_i e_i$). Whence the absolute zero divisors of $0$ are given by the solutions of $N^2=0$.

Next, we provide below an algorithm to decide whether an evolution algebra is semiprime or not.

For a general algebra $A$, the ideals $I\triangleleft A$ with $I^2=0$ are contained in the set $\azd =\azd(A)$ of absolute zero divisors of $A$.
But $\azd$ is an algebraic variety (of degree two) and any $I$ is an algebraic
variety (actually a linear variety). Thus, if $I\subset \azd$, then $I$ is an irreducible component of $\azd(A)$. For instance, consider the evolution $\mathbb{Q}$-algebra with structure matrix
$$\tiny\left(
\begin{array}{cccc}
 1 & \llap{$-$}1 & 0 & 1 \\
 \llap{$-$}1 & 1 & 1 & 2 \\
 0 & 0 & 1 & 0 \\
 0 & 0 & 0 & 1 \\
\end{array}
\right),$$
relative to a natural basis $\{e_i\}$.
We want to compute all the ideals $I$ with $I^2=0$. 
$$\tiny N=\left(
\begin{array}{cccc}
 x & \llap{$-$}y & 0 & t \\
 \llap{$-$}x & y & z & 2 t \\
 0 & 0 & z & 0 \\
 0 & 0 & 0 & t \\
\end{array}
\right),\quad N^2=\left(
\begin{array}{cccc}
 x^2+x y & -x y-y^2 & -y z & t^2+t x-2 t y \\
 \llap{$-$}(x^2+x) y & x y+y^2 & y z+z^2 & 2 t^2-t x+2 t y \\
 0 & 0 & z^2 & 0 \\
 0 & 0 & 0 & t^2 \\
\end{array}
\right)$$
Then $\azd$ is given by the system $z=t=0=x(x+y)=y(x+y)$ and so the unique nontrivial subspace of $\azd$ is the given by $y=-x, z=t=0$, that is, $\mathbb{Q}(e_1-e_2)$. This is indeed an ideal of $A$ and the unique zero square ideal.

Instead of using the variety of absolute zero divisors $\azd(A)$, one can use
the algebraic variety of index-2 nilpotent elements $N_2(A):=\{a\in A\colon a^2=0\}$. Also, the zero square ideals are contained in $N_2(A)$ and we can proceed analogously as before, replacing $\azd(A)$ with $N_2(A)$. As a final example of this kind of analysis consider the evolution algebra of matrix
$$\tiny\left(
\begin{array}{ccccc}
 1 & \llap{$-$}1 & 0 & 0 & 1 \\
 1 & \llap{$-$}1 & 0 & 0 & 1 \\
 0 & 0 & 1 & \llap{$-$}1 & 1 \\
 0 & 0 & 0 & 0 & 1 \\
 0 & 0 & 0 & 0 & 1 \\
\end{array}
\right).$$ Directly from the matrix above, we can observe the existence of certain ideals: 
$\Q e_1+\Q e_2$ and $\Q e_3+\Q e_4$. However, we prove that there is no nonzero ideal $I$ with $I^2=0$. We compute as before
$$\tiny N=\left(
\begin{array}{ccccc}
 x & \llap{$-$}y & 0 & 0 & w \\
 x & \llap{$-$}y & 0 & 0 & w \\
 0 & 0 & z & \llap{$-$}t & w \\
 0 & 0 & 0 & 0 & w \\
 0 & 0 & 0 & 0 & w \\
\end{array}
\right),\quad N^2=\left(
\begin{array}{ccccc}
 x^2-x y & y^2-x y & 0 & 0 & w^2+w x-w y \\
 x^2-x y & y^2-x y & 0 & 0 & w^2+w x-w y \\
 0 & 0 & z^2 & \llap{$-$}t z & -t w+w^2+w z \\
 0 & 0 & 0 & 0 & w^2 \\
 0 & 0 & 0 & 0 & w^2 \\
\end{array}
\right)$$
and equating $N^2=0$ we get $z=w=0$, $x(x-y)=y(y-x)$. Thus, the only solutions
are $z=w=x=y=0$ which is the subspace $\Q e_4$, and $x=y$, $z=w=0$ which is the subspace $\Q(e_1+e_2)+\Q e_4$. So, these are the unique subspaces within the set of zero absolute divisors. But none of these subspaces is an ideal. So this algebra is semiprime though degenerate.

\subsection{Primeness} According to Proposition~\ref{nocaracsemiprime}, the semiprimeness property can not be characterized graphically. However, the primeness property can. For this, we need the remark below and the lemmas following it.

\begin{remark} \label{idealuno} \rm
    Let $A$ be an evolution algebra with natural basis $B=\{e_i\}$. It is easy to check that the ideal of $A$ generated by $e_i^2$, which we denote by $(e_i^2)$, is equal to $\Span{(\{e_k^2 \colon e_i \geq e_k\} )}$.
\end{remark}

\begin{lemma}\label{dormido}
     Consider an evolution $K$-algebra $A$ with natural basis $B =\{e_i\}_{i \in \Lambda}$, $M=(\omega_{ij})$ the structure matrix relative to $B$, $E$ the corresponding associated graph, and $i,j\in \Lambda$. The following assertions are equivalent.
     \begin{enumerate} [\rm (i)]
        
               \item There is a path from $e_i$ to $e_j$.
        
         \item $j \in \sop(I)$, where $I=(e_i^2)$.
     \end{enumerate}   
\end{lemma}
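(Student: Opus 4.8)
The plan is to reduce the statement to the explicit description of $I=(e_i^2)$ provided by Remark~\ref{idealuno}, which gives $I=\Span(\{e_k^2 : e_i\ge e_k\})$, and then to read off $\sop(I)$ combinatorially. First I would record a general fact about supports: for a subspace spanned by a family of vectors, the union of the supports of all its elements equals the union of the supports of the spanning vectors. Indeed, each generator lies in $I$, so its support is contained in $\sop(I)$; conversely, every element of $I$ is a finite linear combination of generators, so its support is contained in the union of their supports. Applying this to the family $\{e_k^2 : e_i \ge e_k\}$ yields
\[
\sop(I)=\bigcup_{k\,:\,e_i\ge e_k}\sop(e_k^2).
\]
Since $e_k^2=\sum_q \omega_{qk}e_q$, we have $\sop(e_k^2)=\{q : \omega_{qk}\ne 0\}$, which by the definition of the associated graph is precisely the set of vertices $e_q$ that receive an edge from $e_k$.

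With this identity the equivalence becomes a statement about walks in $E$, which I would establish in both directions by concatenating and decomposing paths. For (ii)$\Rightarrow$(i): if $j\in\sop(I)$ there is an index $k$ with $e_i\ge e_k$ (a path, possibly trivial, from $e_i$ to $e_k$) and $\omega_{jk}\ne 0$ (an edge $e_k\to e_j$); concatenating gives a path from $e_i$ to $e_j$. For (i)$\Rightarrow$(ii): given a path $f_1\cdots f_m$ from $e_i$ to $e_j$, take $e_k:=s(f_m)$ to be the source of its last edge $f_m$; then the initial segment witnesses $e_i\ge e_k$, while $f_m$ gives $\omega_{jk}\ne 0$, so $j\in\sop(e_k^2)\subseteq\sop(I)$.

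The one point requiring care, and the only place I expect genuine content beyond routine bookkeeping, is the trivial-path convention. Because $e_i\ge e_i$ holds via the length-zero path, the index $k=i$ always appears in the union, and the corresponding contribution $\sop(e_i^2)$ records exactly the edges emanating from $e_i$; thus the ``path from $e_i$ to $e_j$'' in (i) has to be read as a path of positive length. This distinction is invisible when $i\ne j$, and is only relevant when $i=j$, where membership $i\in\sop(I)$ detects a nontrivial closed path based at $e_i$ rather than the trivial path. Once this convention is fixed, verifying that support behaves correctly under spans and that the path concatenation/decomposition is valid completes the argument.
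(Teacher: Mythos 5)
Your proof is correct and follows essentially the same route as the paper's: both reduce to the description of $(e_i^2)$ in Remark~\ref{idealuno}, identify $\sop(I)$ with $\bigcup_{k\,:\,e_i\ge e_k}\sop(e_k^2)$, and conclude by concatenating/decomposing paths (your $e_k=s(f_m)$ is exactly the paper's $e_{i_{n-1}}$). Your explicit remark that statement (i) must be read with paths of positive length when $i=j$ is a point the paper's proof leaves implicit, but it does not change the argument.
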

\begin{proof}

 Suppose that there is a path from $e_i$ to $e_j$. This implies that there exists a subset $\{e_{i_1}, \ldots,  e_{i_{n}}\}$ of $B$, with $e_i=e_{i_1}$ and $e_j=e_{i_n}$,  such that $i_k\in \sop(e_{i_{k-1}}^2)$ for $k\in \{2,\ldots,n\}$. Note that $e_{i_k}^2 \in I=(e_i^2)$ for $k\in \{1,\ldots,n\}$. In particular, $i_n=j\in \sop(e_{i_{n-1}}^2)\subset \sop(I)$. 
For the other implication, suppose that $j\in \sop(I)$. Then, $j \in \sop(x)$ for certain $x\in I$. Using  Remark~\ref{idealuno}, write $x=\sum_{e_i\geq e_k}\lambda_k e_k^2$. Observe that if $j \in \sop(x)$ then there exists $e_{k_0}$, with $e_i \geq e_{k_0}$, such that $j \in \sop(e_{k_0}^2)$. Note that $e_{k_0}\geq e_j$.  So, $e_i \geq e_{k_0} \geq e_j$.

    \end{proof}

\begin{proposition} \label{caractdirect}
    Consider an evolution $K$-algebra $A$ with natural basis $B =\{e_i\}_{i \in \Lambda}$, and let $E$ be the corresponding associated graph. The following assertions are equivalent.
    \begin{enumerate}[\rm (i)]
        \item For every pair of nontrivial ideals $I,J$ of $A$, $\sop(I) \cap \sop(J) \neq \emptyset$.
        \item The graph $E$ is downward directed. 
    \end{enumerate}
\end{proposition}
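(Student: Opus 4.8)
The plan is to prove the two implications separately, using Lemma~\ref{dormido} as the dictionary between the existence of paths in $E$ and membership of vertices in the support of a principal ideal $(e_i^2)$.

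For the implication (ii) $\Rightarrow$ (i), the first thing I would establish is a \emph{closure claim}: if $I$ is an ideal of $A$, $i\in\sop(I)$, and there is a path from $e_i$ to $e_m$, then $m\in\sop(I)$. Granting this, the argument concludes quickly: given nonzero ideals $I,J$, choose $i\in\sop(I)$ and $j\in\sop(J)$; by downward directedness there is a vertex $e_m$ with $e_i\geq e_m$ and $e_j\geq e_m$; the closure claim applied to $I$ and to $J$ then places $m$ simultaneously in $\sop(I)$ and in $\sop(J)$, so $\sop(I)\cap\sop(J)\neq\emptyset$.

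To prove the closure claim I would select $x\in I$ with $x_i\neq 0$ and split on the length of the path. If the path is trivial, then $m=i\in\sop(x)\subseteq\sop(I)$. If it has positive length, then $e_i$ emits an edge, so $e_i^2\neq 0$; since $e_i x = x_i e_i^2\in I$ and $x_i\neq 0$, we get $e_i^2\in I$ and hence $(e_i^2)\subseteq I$, so Lemma~\ref{dormido} yields $m\in\sop((e_i^2))\subseteq\sop(I)$. I expect this to be the main obstacle: one must handle sinks and the trivial-path convention carefully, and the extraction of $e_i^2$ from $I$ via left multiplication by $e_i$ (using $e_i x = x_i e_i^2$) is precisely where the evolution-algebra structure enters.

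For (i) $\Rightarrow$ (ii) I would argue by contrapositive. If $E$ is not downward directed, there exist vertices $u,v$ admitting no common descendant; equivalently the reachable sets $R(u):=\{w\in E^0 : u\geq w\}$ and $R(v):=\{w\in E^0 : v\geq w\}$ are disjoint. Each such set is hereditary, and for a hereditary set $H$ the subspace $I_H=\Span(H)$ is an ideal (for $e_k\in H$ one has $\sop(e_k^2)\subseteq H$, hence $e_k^2\in I_H$). Thus $I_{R(u)}$ and $I_{R(v)}$ are nonzero ideals with $\sop(I_{R(u)})=R(u)$ and $\sop(I_{R(v)})=R(v)$; since $u\in R(u)$ and $v\in R(v)$ force both sets to be proper, these are genuine nontrivial ideals whose supports are disjoint, so (i) fails and the contrapositive is complete.
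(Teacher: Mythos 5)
Your proof is correct, and one of the two directions takes a genuinely different route from the paper. For (ii) $\Rightarrow$ (i) you do essentially what the paper does: pick $i\in\sop(x)$, $j\in\sop(y)$, use downward directedness to get a common descendant, and transport it into both supports via $e_ix=x_ie_i^2\in I$ and Lemma~\ref{dormido}; your explicit closure claim actually treats the sink/trivial-path edge case (where $e_i^2=0$, so $(e_i^2)=0$ and Lemma~\ref{dormido} alone gives nothing) more carefully than the paper's terse ``$k\in\sop(e_i^2)\subset\sop(I)$''. The divergence is in (i) $\Rightarrow$ (ii): the paper argues directly, building the principal ideals $(e_i^2)$ and $(e_j^2)$ (replaced by $Ke_i$ or $Ke_j$ when the squares vanish), splitting into three cases according to which of $e_i^2,e_j^2$ are zero, and extracting a common descendant from a common support index via Lemma~\ref{dormido}; you instead argue by contrapositive, observing that the reachable sets $R(u),R(v)$ of a pair with no common descendant are disjoint hereditary sets, so $I_{R(u)}$ and $I_{R(v)}$ are nonzero ideals with disjoint supports. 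Your construction buys a uniform argument with no case analysis on sinks, at the cost of invoking the correspondence $H\mapsto I_H$ between hereditary subsets and basic ideals (which the paper records in the prime-ideals subsection, so it is available); the paper's version stays entirely within the principal ideals that Lemma~\ref{dormido} is about. Both arguments are complete.
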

\begin{proof}
 Let $e_i, e_j \in E^0$. Suppose that $e_i^2=e_j^2=0$. In this case, we consider $I=(e_i)=Ke_i$ and $J=(e_j)=Ke_j$. Then, $\sop(I)=\{i\}$ and  $\sop(J)=\{j\}$, and hence $e_i=e_j$. Next, we consider the case $e_i^2=0 $ and $e_j^2\neq 0$. Let  $J=(e_j^2)$ and $I=(e_i)$. Then, $i \in \sop(J)$ because $\sop(I) \cap \sop(J) \neq \emptyset$. By Lemma~\ref{dormido} we have that $e_j \geq e_i$. For the final case, assume $e_i^2 \neq 0$  and $e_j^2 \neq 0 $. Define the nontrivial ideals $I= (e_i^2)$ and $J=(e_j^2)$. By hypothesis, $\sop(I)\cap \sop (J) \neq \emptyset$. Take $k \in \sop(I)\cap \sop (J)$.  Then, by Lemma~\ref{dormido}, $e_i\geq e_k$ and $e_j\geq e_k$.
 
 Conversely, consider two nontrivial ideals $I,J$ and take $0 \ne x \in I$ and $0 \neq y \in J$. Let $i \in \sop(x)$ and $j \in \sop(y)$. By hypothesis, there is a vertex $e_k \in E^0$ such that $e_i \geq e_k$ and $e_j \geq e_k$. By Lemma~\ref{dormido},  we have that $k \in \sop(e_i^2) \subset \sop (I)$ and $k \in \sop(e_j^2) \subset \sop(J)$. Then $k \in \sop(I) \cap \sop(J)$.

\end{proof}

\begin{remark} \rm
    For a semiprime  algebra $A$, it is easy to check that  the conditions $IJ=0$ and $I\cap J=0$ (for any two ideals) are equivalent:
If $I\cap J=0$, then $IJ\subset I\cap J=0$. Conversely, if $IJ=0$, then $(I\cap J)^2=0$ and by semiprimeness $I\cap J=0$.
\end{remark}

\begin{lemma}\label{nuevo}
    Let $A$ be an evolution $K$-algebra with natural basis $B=\{e_i\}_{i \in \Lambda}$ and $I,J$ two ideals of $A$. Then, $\sop(I) \cap \sop(J) = \emptyset$ implies that $IJ=0$. Furthermore, the converse is true if $A$ is semiprime.
\end{lemma}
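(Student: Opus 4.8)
The plan is to treat the two implications separately: the forward direction by a direct computation in the natural basis, and the converse, under semiprimeness, by reducing the problem to the intersection $I\cap J$.

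For the implication $\sop(I)\cap\sop(J)=\emptyset\Rightarrow IJ=0$, I would take arbitrary $x\in I$ and $y\in J$ and expand them as $x=\sum_i x_ie_i$ and $y=\sum_j y_je_j$. Since $e_ie_j=0$ whenever $i\neq j$, the product collapses to $xy=\sum_i x_iy_ie_i^2$, where the index $i$ ranges only over $\sop(x)\cap\sop(y)$. As $\sop(x)\subseteq\sop(I)$ and $\sop(y)\subseteq\sop(J)$, this index set is contained in $\sop(I)\cap\sop(J)=\emptyset$, whence $xy=0$. Since $x$ and $y$ were arbitrary, $IJ=0$.

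For the converse I would argue by contraposition, assuming $\sop(I)\cap\sop(J)\neq\emptyset$ and producing a nonzero product. The step I expect to be the main obstacle is that one cannot simply pick elements $x\in I$, $y\in J$ sharing an index $k$ in their supports and conclude $xy\neq0$, because the sum $\sum_i x_iy_ie_i^2$ may vanish through cancellation among the various $e_i^2$. To sidestep this I would exploit the ideal structure rather than a single product: choosing $k\in\sop(I)\cap\sop(J)$ and $x\in I$ with $x_k\neq0$, the element $xe_k=x_ke_k^2$ lies in $I$, and since $x_k\neq0$ is an invertible scalar this forces $e_k^2\in I$. Running the same argument in $J$ gives $e_k^2\in J$, so that $e_k^2\in I\cap J$.

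It then remains to check that $e_k^2$ is nonzero and that a nonzero intersection forces $IJ\neq0$. For the first point, semiprimeness implies via Proposition~\ref{ostra}(\ref{jacare}) that $A$ has zero annihilator, and then via Proposition~\ref{ostra}(\ref{otro}) that $E$ is sinkless, so no basis vector squares to zero; hence $0\neq e_k^2\in I\cap J$ and $I\cap J\neq0$. For the second point I would invoke the remark preceding the statement, which records that for a semiprime algebra the conditions $IJ=0$ and $I\cap J=0$ are equivalent; its contrapositive yields $IJ\neq0$. This establishes the contrapositive of the converse and completes the proof.
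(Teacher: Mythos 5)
Your proposal is correct and follows essentially the same route as the paper: the forward direction is the same support computation, and the converse uses the same three ingredients (that $k\in\sop(I)\cap\sop(J)$ forces $e_k^2\in I\cap J$, that semiprimeness gives zero annihilator so $e_k^2\neq 0$, and the preceding remark equating $IJ=0$ with $I\cap J=0$), merely phrased in contrapositive form. Your explicit justification that $e_k^2\in I$ via $xe_k=x_ke_k^2$ fills in a step the paper leaves implicit.
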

\begin{proof}
    The first part is clear because if $x=\sum_{j \in \sop(x)}x_je_j$, and $y=\sum_{i \in \sop(y)}y_ie_i$, then $xy =\sum_{j\in \Gamma} x_j y_j e_j^2$, where $ \Gamma:= \sop(x) \cap \sop(y) $.  For the second part, assume that  $A$ is semiprime and suppose that  $IJ=0$. Then, $I\cap J=0$. This  implies that $\sop(I) \cap \sop(J) = \emptyset$. Indeed, if there exists $k\in \sop(I) \cap \sop(J) $, then $e_k^2 \in I \cap J=0$, but this is not possible because $A$ is semiprime, and so it is a zero annihilator algebra (Proposition~\ref{ostra}).
\end{proof}

\begin{theorem}\label{despierto}
    Suppose that $A$ is an evolution algebra, and $E$ is an associated graph. If $A$ is prime, then $E$ is downward directed.
\end{theorem}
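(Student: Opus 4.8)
The plan is to reduce the statement to the combinatorial criterion already established in Proposition~\ref{caractdirect}, which says that $E$ is downward directed precisely when $\sop(I)\cap\sop(J)\neq\emptyset$ for every pair of nonzero (nontrivial) ideals $I,J$ of $A$. Thus it suffices to show that primeness of $A$ forces this support-overlap condition, after which Proposition~\ref{caractdirect} immediately yields that $E$ is downward directed.

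To establish the support-overlap condition, I would argue by contradiction. Suppose $I$ and $J$ are nonzero ideals of $A$ with $\sop(I)\cap\sop(J)=\emptyset$. The easy (unconditional) half of Lemma~\ref{nuevo} then gives $IJ=0$. Since $A$ is prime, this forces $I=0$ or $J=0$, contradicting the choice of $I,J$ as nonzero. Hence no such pair exists, i.e. $\sop(I)\cap\sop(J)\neq\emptyset$ for all nonzero ideals, and the theorem follows.

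Two points are worth flagging. First, the argument only uses the first assertion of Lemma~\ref{nuevo}, namely the implication $\sop(I)\cap\sop(J)=\emptyset \Rightarrow IJ=0$, which holds for \emph{any} evolution algebra; the semiprimeness hypothesis needed for its converse plays no role here. Second, essentially all of the genuine work has been front-loaded into Proposition~\ref{caractdirect} and Lemma~\ref{nuevo}, so the present proof is a short syllogism and I do not expect a real obstacle. The only care needed is to match the notion of nontrivial ideal used in Proposition~\ref{caractdirect} (nonzero) with the hypothesis of primeness, and to note that a prime algebra is in particular nonzero so that the statement is non-vacuous. The substantive companion statement — that, conversely, the downward directed condition implies primeness — is the harder direction and is treated separately (and there the semiprimeness/perfection assumption does enter, cf. Corollary~\ref{cor_prime_eq_down}).
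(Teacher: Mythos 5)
Your proposal is correct and follows essentially the same route as the paper: both reduce the statement to Proposition~\ref{caractdirect} and invoke the unconditional direction of Lemma~\ref{nuevo} to get $IJ=0$ from disjoint supports, contradicting primeness. The only cosmetic difference is that the paper phrases it as a direct contrapositive while you assume disjoint supports and derive the contradiction, which is logically the same argument.
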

\begin{proof}
    Suppose that $E$ does not satisfy the downward directed condition. By Proposition~\ref{caractdirect}, there exist nontrivial ideals $I$  and $J$ such that $\sop(I)\cap \sop(J)=\emptyset$. Thus, by Lemma~\ref{nuevo}, we obtain that $IJ= 0$, which contradicts the primeness of $A$.
\end{proof}

Next, we will see that the above theorem's reciprocal is not generally true.

\begin{example} \rm
  Consider the Example~\ref{ejemplonode}. 
Clearly, the graph satisfies the downward directed condition. Nevertheless, $A$ is not semiprime. Indeed, if we take $I=\Span{(e_1+e_2)}$, then $I^2=0$.
\end{example}

We prove below that the converse of  Theorem~\ref{despierto} holds for semiprime evolution algebras. 

\begin{corollary}\label{cor_prime_eq_down}
For a semiprime evolution algebra $A$, the following are equivalent:
\begin{enumerate}[\rm (i)]
    \item $A$ is prime.
    \item Any graph $E$ associated to $A$ is downward directed.
\end{enumerate}
\end{corollary}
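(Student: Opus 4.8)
The plan is to prove the equivalence by combining Theorem~\ref{despierto} with the earlier characterization results, exploiting the semiprimeness hypothesis crucially in the direction that is false without it. The implication (i)$\Rightarrow$(ii) is immediate: it is exactly the content of Theorem~\ref{despierto}, which requires no extra hypothesis at all. So the real work is entirely in the converse (ii)$\Rightarrow$(i), and this is where semiprimeness enters.

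For (ii)$\Rightarrow$(i), suppose every associated graph $E$ is downward directed, and let $I,J$ be ideals of $A$ with $IJ=0$; the goal is to conclude $I=0$ or $J=0$. First I would dispose of the trivial case: if one of $I,J$ is zero we are done, so assume both are nontrivial. Since $E$ is downward directed, Proposition~\ref{caractdirect} gives $\sop(I)\cap\sop(J)\neq\emptyset$. Here is the key step where semiprimeness is used: by Lemma~\ref{nuevo}, for a semiprime algebra the condition $IJ=0$ forces $\sop(I)\cap\sop(J)=\emptyset$. This directly contradicts the nonempty intersection just obtained. Hence one of the ideals must be trivial, establishing primeness.

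The logical skeleton is therefore a short contradiction argument: downward directedness yields overlapping supports via Proposition~\ref{caractdirect}, while $IJ=0$ together with semiprimeness yields disjoint supports via the converse part of Lemma~\ref{nuevo}, and these cannot both hold for nontrivial ideals. I would phrase it so that the two cited results are applied to the same pair $I,J$ and the contradiction is visible at once. Note that the downward directed condition is stated for \emph{any} graph associated to $A$; since Proposition~\ref{caractdirect} is basis-relative, one fixes a natural basis $B$ with associated graph $E$ and applies the support condition with respect to that basis, which is legitimate because both Lemma~\ref{nuevo} and Proposition~\ref{caractdirect} are stated relative to a fixed natural basis.

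The main obstacle, such as it is, is conceptual rather than computational: one must recognize that semiprimeness is precisely the hypothesis that upgrades the one-directional Lemma~\ref{nuevo} into a biconditional, thereby converting the ``support'' reformulation of primeness (Proposition~\ref{caractdirect}) into an honest statement about the ideals $I,J$ and their product. Without semiprimeness the converse of Lemma~\ref{nuevo} fails, and indeed the Example following Theorem~\ref{despierto} exhibits a downward directed but non-prime (non-semiprime) algebra, confirming that this hypothesis cannot be dropped. Beyond this observation, the proof is a routine chaining of the cited results and should occupy only a few lines.
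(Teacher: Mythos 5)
Your proof is correct and takes essentially the same route as the paper's: (i)$\Rightarrow$(ii) is Theorem~\ref{despierto}, and (ii)$\Rightarrow$(i) chains Proposition~\ref{caractdirect} with the semiprime converse of Lemma~\ref{nuevo}, which you merely phrase as a contradiction where the paper argues directly. The surrounding commentary on why semiprimeness is exactly what upgrades Lemma~\ref{nuevo} to a biconditional is accurate but not needed for the argument.
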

\begin{proof}
By Theorem \ref{despierto}, we have that (i) implies (ii). Assume now that any graph associated to $A$ is downward directed. By Proposition~\ref{caractdirect},  for any nontrivial ideals $I,J$ of $A$, we have that $\sop(I)\cap \sop(J) \neq \emptyset$. Therefore, by Lemma~\ref{nuevo}, we obtain that $IJ\neq 0$.
\end{proof}

\begin{remark}\rm 
A prime evolution algebra is not necessarily perfect, as we show in the following example: let $A$ be an evolution algebra with natural basis $B=\{e_1,e_2\}$ such that $e_1^2= e_1$ and $e_2^2=e_1$. The associated graph is
\bigskip
\begin{equation*} 
   E:   \xymatrix{
     & {\bullet}_{e_{2}} \ar[r]  & {\bullet}_{e_{1}} \ar@(ul,ur) &    \\
           }
           \end{equation*}

The only nontrivial ideal is $A^2 = \Span(e_1)$. Note that $A^2A^2 = A^2$. Consequently, the algebra is prime but not perfect.

Furthermore, as primeness entails semiprimeness, it follows that a semiprime evolution algebra is not necessarily perfect. The converse of this last statement is true, as we see in the following proposition.
\end{remark}

\begin{proposition}\label{prop_perfect_implies_semiprime}
    If $A$ is a perfect evolution algebra, then $A$ is semiprime.
\end{proposition}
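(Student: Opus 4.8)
The plan is to argue by contradiction, using perfection only through the fact that $\{e_i^2\}_{i\in\Lambda}$ is a basis of $A$. Perfection means $A=A^2$, i.e.\ that $\{e_i^2\}_{i\in\Lambda}$ spans $A$; in the finite-dimensional setting this is equivalent to $|M_B|\neq 0$, so $\{e_i^2\}_{i\in\Lambda}$ is in fact a basis. Suppose then, for contradiction, that $A$ has a nonzero ideal $I$ with $I^2=0$, and set $H:=\sop(I)\neq\emptyset$.

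The key step is to observe that $e_k^2\in I$ for every $k\in H$. Indeed, given $k\in H$ there is $x=\sum_t x_t e_t\in I$ with $x_k\neq 0$; since $I$ is an ideal, $xe_k=x_k e_k^2\in I$, and dividing by $x_k$ yields $e_k^2\in I$. Consequently $\sop(e_k^2)\subseteq\sop(I)=H$ for each $k\in H$.

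Next I would run a dimension-squeeze. Put $V:=\Span\{e_k\colon k\in H\}$ and $W:=\Span\{e_k^2\colon k\in H\}$. The previous step gives $W\subseteq I\subseteq V$: the first inclusion because each $e_k^2\in I$, and the second because $\sop(I)=H$. Since $\{e_i^2\}_{i\in\Lambda}$ is a basis, its subfamily $\{e_k^2\colon k\in H\}$ is linearly independent, so $\dim W=|H|=\dim V$; combined with $W\subseteq V$ this forces $W=V$, and therefore $I=V=W$. Computing the square of $I=V$ with $e_ke_l=0$ for $k\neq l$ gives $I^2=\Span\{e_k^2\colon k\in H\}=W=I\neq 0$, contradicting $I^2=0$. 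Hence no such $I$ exists and $A$ is semiprime.

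The entire weight of the argument rests on the linear independence of $\{e_k^2\}_{k\in H}$, which is exactly what perfection supplies and is the step I expect to be the crux; everything else is bookkeeping with supports. Without it the inclusions $W\subseteq I\subseteq V$ are harmless and cannot collapse $I$. Equivalently, one may note that $H$ is hereditary (by $\sop(e_k^2)\subseteq H$ for $k\in H$), so ordering the basis as $(H,H^c)$ makes $M_B$ block upper-triangular; then $|M_B|\neq 0$ forces the diagonal block $(\w_{qk})_{q,k\in H}$ to be invertible, i.e.\ $\Span(H)$ is a perfect ideal squeezed as $\Span(H)=\Span(H)^2\subseteq I\subseteq\Span(H)$.
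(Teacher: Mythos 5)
Your argument is correct and shares the paper's two load-bearing steps: that $e_k^2\in I$ for every $k\in\sop(I)$ (obtained exactly as you obtain it, from $xe_k=x_ke_k^2\in I$), and that perfection forces the squares $\{e_i^2\}$ to be linearly independent. The difference is only in the endgame. The paper stops one multiplication later: having $e_i^2\in I$ and $e_i^2\neq 0$, it computes $(e_i^2)^2=\sum_{j}\w_{ji}^2e_j^2\in I^2$, which is nonzero by the linear independence of the finitely many squares appearing in that sum --- no identification of $I$ and no dimension count are needed. Your dimension squeeze proves the stronger structural fact that $I=\Span\{e_k\colon k\in H\}$ and that this ideal is idempotent, which is more information but also more machinery; in particular the implication $\dim W=\vert H\vert=\dim V\Rightarrow W=V$ genuinely requires $\vert H\vert<\infty$, whereas the paper's computation is local. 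Both proofs ultimately hinge on the same delicate point, namely that $A=A^2$ makes $\{e_i^2\}$ linearly independent: you derive this from $\vert M_B\vert\neq 0$ in the finite-dimensional case, while the paper asserts it as a consequence of perfection without restriction (it leans on the same fact in Proposition~\ref{colorida}). So the two arguments are equivalent in substance, yours being a slightly longer route, via a complete description of $I$, to the same contradiction.
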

\begin{proof}
 Let $B = \{e_i\}_{i\in \Lambda}$ be a natural basis for $A$. 
    If $I$ is a nontrivial  ideal of $A$, then there is an element $x \in I$ such as $\sop(x) \neq \emptyset$. Consider $e_i \in B$ with $i \in \sop(x)$. Then, $e_i^2 \in I$ and, since $A$ is perfect, we have that $e_i^2 \neq 0$. Moreover,  $(e_i^2)^2 \in I^2$ and $(e_i^2)^2 = \sum_{j\in \Lambda}\w_{ji}^2e_j^2$, which is non-zero since $A$ is perfect.
\end{proof}
 From the Proposition \ref{prop_perfect_implies_semiprime} and Corollary~\ref{cor_prime_eq_down}, we get the following graphical characterization of the primeness of perfect evolution algebras.

\begin{corollary}\label{cool}
A perfect evolution algebra $A$ is prime if and only if any graph $E$ associated to $A$ is downward directed.
\end{corollary}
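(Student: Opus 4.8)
The plan is to deduce this statement as an immediate consequence of the two results established immediately before it, so that essentially no new argument is required. The key observation is that perfection is a strictly stronger hypothesis than semiprimeness for evolution algebras, which allows me to reduce the perfect case to the already-settled semiprime case of Corollary~\ref{cor_prime_eq_down}.

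Concretely, I would proceed as follows. First, I would invoke Proposition~\ref{prop_perfect_implies_semiprime} to conclude that, since $A$ is perfect, it is in particular semiprime. This is the only substantive reduction step, and its justification is already in hand: the proof of that proposition shows that for any nontrivial ideal $I$ one has $(e_i^2)^2=\sum_{j\in\Lambda}\w_{ji}^2 e_j^2\in I^2$ nonzero whenever $i\in\sop(x)$ for some $x\in I$, so $A$ has no nonzero square-zero ideal. Second, having placed $A$ within the class of semiprime evolution algebras, I would apply Corollary~\ref{cor_prime_eq_down} verbatim: for a semiprime evolution algebra, primeness is equivalent to every associated graph being downward directed. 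Chaining these two facts yields exactly the claimed equivalence for perfect $A$.

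I do not expect a genuine obstacle here, since the conceptual content has already been extracted in the chain Proposition~\ref{caractdirect} $\to$ Lemma~\ref{nuevo} $\to$ Theorem~\ref{despierto} $\to$ Corollary~\ref{cor_prime_eq_down}, together with Proposition~\ref{prop_perfect_implies_semiprime}. The only point meriting a word of care is to make explicit that the hypothesis ``any graph $E$ associated to $A$ is downward directed'' is used in the same universally-quantified form in both this corollary and Corollary~\ref{cor_prime_eq_down}, so that no mismatch arises between ``some associated graph'' and ``any associated graph''; since Theorem~\ref{despierto} shows primeness forces \emph{every} associated graph to be downward directed, and the converse direction in Corollary~\ref{cor_prime_eq_down} assumes every associated graph is downward directed, the quantifiers align and the combination is clean. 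Thus the proof reduces to two citations, and I would write it in a single short paragraph noting that perfection implies semiprimeness (Proposition~\ref{prop_perfect_implies_semiprime}) and then applying Corollary~\ref{cor_prime_eq_down}.
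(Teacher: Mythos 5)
Your proposal is correct and matches the paper's argument exactly: the paper also derives Corollary~\ref{cool} by combining Proposition~\ref{prop_perfect_implies_semiprime} (perfect implies semiprime) with Corollary~\ref{cor_prime_eq_down}. Your additional remark about the alignment of quantifiers is a sensible clarification but not needed beyond what the paper already records.
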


\subsection{Prime ideals} In this subsection, we characterize the prime ideals of an evolution algebra $A$ with basis $B=\{e_i\}_{i \in \Lambda}$ and associated graph $E$. Recall that given an ideal $I$ the set $H_I:=\{e_i\in B:e_i^2\in I\}$ is an hereditary subset of $E^0$. 
Moreover, if $H$ is a hereditary subset of $E^0$, then the subspace $I_H=\bigoplus_{e_i\in H} K e_i$ is a basic ideal of $A$, see \cite[Lemma~3.2]{conecting}.

\begin{proposition}\label{rp3barulhento}
  Let $A$ be an evolution $K$-algebra. If $I$ is a prime ideal of $A$, then $I=I_{H_I}$.
\end{proposition}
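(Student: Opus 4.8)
The plan is to prove the equality $I = I_{H_I}$ by establishing the two inclusions separately: the inclusion $I \subseteq I_{H_I}$ holds for every ideal and does not use primeness, while the reverse $I_{H_I} \subseteq I$ is where primeness enters.

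First I would dispatch $I \subseteq I_{H_I}$, which is valid for an arbitrary ideal (it is \cite[Proposition~3.4(4)]{conecting}, but it is also immediate). Given $x = \sum_{j \in \sop(x)} x_j e_j \in I$, I multiply by a basis vector $e_j$: since $e_k e_j = 0$ for $k \neq j$, one has $x e_j = x_j e_j^2 \in I$, and because $x_j \neq 0$ this forces $e_j^2 \in I$, i.e. $e_j \in H_I$. Hence $\sop(x) \subseteq \{i \colon e_i \in H_I\}$ and $x \in I_{H_I}$.

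For the reverse inclusion, since $I_{H_I} = \Span(H_I)$, it suffices to show each generator lies in $I$; that is, the key step is: if $I$ is prime and $e_i^2 \in I$, then $e_i \in I$. The idea is to consider the principal ideal $(e_i)$, show its square is trapped in $I$, and then invoke primeness. I would first identify $(e_i) = K e_i + (e_i^2)$: indeed $A e_i = K e_i^2$, so $K e_i + (e_i^2)$ is already an ideal containing $e_i$ and therefore equals the ideal it generates. Using Remark~\ref{idealuno}, $(e_i^2) = \Span\{e_k^2 \colon e_i \geq e_k\}$, and a direct computation gives $e_i \cdot e_k^2 = \omega_{ik} e_i^2 \in K e_i^2$. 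Expanding $(e_i)^2 = (K e_i + (e_i^2))^2$ and using $e_i^2 \in I$, together with $e_i \cdot (e_i^2) \subseteq K e_i^2 \subseteq I$ and $(e_i^2)(e_i^2) \subseteq (e_i^2) \subseteq I$, shows that every summand lands in $I$, so $(e_i)^2 \subseteq I$.

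Finally, since $I$ is prime, the containment $(e_i)(e_i) \subseteq I$ forces $(e_i) \subseteq I$: here I would apply the element-free form of primeness, namely $PQ \subseteq I \Rightarrow P \subseteq I$ or $Q \subseteq I$, which follows by passing to $P+I$ and $Q+I$ (both containing $I$, with $(P+I)(Q+I) \subseteq I$) and using that $A/I$ is prime. In particular $e_i \in I$, which completes $I_{H_I} \subseteq I$ and hence the equality. The main obstacle is this second inclusion; concretely it rests on correctly computing the principal ideal $(e_i)$ and verifying $(e_i)^2 \subseteq I$, after which primeness does the rest.
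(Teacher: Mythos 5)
Your proof is correct, but the mechanism by which you invoke primeness differs from the paper's. For the reverse inclusion $I_{H_I}\subseteq I$ the paper takes an arbitrary $x\in I_{H_I}$, observes that $xe_j$ is either $0$ or a scalar multiple of some $e_j^2\in I$, so $xA\subseteq I$, and then concludes $x\in I$ because prime ideals have the absorption property (equivalently, $[x](A/I)=0$ forces $[x]=0$ in the prime algebra $A/I$; cf.\ Remark~\ref{primeabsorption}). You instead work generator by generator: you compute the principal ideal $(e_i)=Ke_i+(e_i^2)$, verify $(e_i)^2\subseteq I$ using $e_i\cdot e_k^2=\omega_{ik}e_i^2$ and $e_i^2\in I$, and apply the two-ideal form of primeness with $P=Q=(e_i)$. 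Both routes are sound; the paper's is shorter because the absorption property handles a general element of $I_{H_I}$ in one stroke, whereas yours requires the (correct, and mildly instructive) identification of the principal ideal $(e_i)$ but uses only the bare definition of a prime ideal rather than the derived absorption property. Your first inclusion is the same direct verification as in the cited reference.
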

\begin{proof}
    Suppose that $I$ is a prime ideal. By \cite[Proposition~3.4]{conecting} we have $I\subseteq I_{H_I}$, so we need to prove the other inclusion. Let $x\in I_{H_I}$ and write $x=\sum_{e_i\in H_I} \lambda_i e_i.$  Then, $x e_j \in I$ for all $j$ and so $x A \subseteq I$. Hence $[x] A/I =0$, where $ A/I$ is the quotient set modulo $I$ and $[x]$ is the equivalence class of $x$ in $A/I$. Therefore, since $I$ is prime, we obtain that $[x]=0$, i.e., $x\in I$.

\end{proof}

The converse of the proposition above is not true in general, as we show in the example below.

\begin{example}\label{ejemplonoprimo}\rm 
Consider the evolution algebra $A$ with natural basis $B=\{e_i\}_{i=1}^5$ and structure matrix 
$M_B = {\tiny\begin{pmatrix}
1 & 0 & 1 & 0 & 0 \\
0 & 1 & 1 & 0 & 0 \\
0 & 0 & 1 & 0 & 0 \\
0 & 0 & 1 & 1 & \llap{$-$}1 \\
0 & 0 & 0 & 1 & \llap{$-$}1 \\
\end{pmatrix}}.$  The associated graph is

\vspace{0.5cm}
\begin{equation*} 
   E:   \xymatrix{
      {\bullet}_{e_{2}} \ar@(ul,ur)  & {\bullet}_{e_{1}}  \ar@(ul,ur) & {\bullet}_{e_{4}} \ar@(ul,ur)  \ar@/^/[r] & {\bullet}_{e_{5}}\ar@(ul,ur)   \ar@/^/[l] \\
   & & {\bullet}_{e_{3}} \ar@(dl,dr) \ar[ul] \ar[ull] \ar[u]  &  \\
               }
\vspace{0.5cm}           \end{equation*}
Let $I= K e_4 \oplus K e_5$. In this case, $H_I=\{e_4,e_5\}$ and hence $I=I_{H_I}$. Nevertheless, $I$ is not a prime ideal. Indeed, notice that the graph of $A/I$ is obtained by removing $e_4$, $e_5$ and the arrows concomitant with those from the graph above and so it is not downward directed. Hence, by  Theorem \ref{despierto}, $A/I$ is not a prime algebra. 
\end{example}

Not every hereditary subset $H$ provides a prime ideal $I_H$, as the example above shows. However, we have the following characterization.
\begin{theorem}\label{charprimo}
    Let $A$ be an evolution algebra with $B=\{e_i\}_{i \in \Lambda}$ a natural basis and $I$ an ideal of $A$. Let $E$ be the associated graph to $A$ relative to $B$. Then the following assertions are equivalent:
    \begin{enumerate}[\rm (i)]
        \item $I$ is a prime ideal.
        \item $E/H_I$   is downward directed and $A/I$ is semiprime.
    \end{enumerate}
    \begin{proof}
        We begin with the proof of the implication  (i) $\Rightarrow$ (ii). Taking into account Remark~\ref{primeabsorption}, the ideal $I$ has the absorption property. Then,  \cite[Theorem 5.5]{conecting} implies that $I=I_{H_I}$. Since $I$ is prime, we obtain that $A/I$ is prime (hence semiprime) and by Corollary~\ref{cor_prime_eq_down} its associated graph is downward directed. Therefore, applying \cite[Theorem 6.5]{conecting}, we conclude that $E/H_I$ is downward directed. 
        To prove the converse, since $A/I$ is semiprime and $E/H_I$ is downward directed, again using Corollary~\ref{cor_prime_eq_down} and \cite[Theorem 6.5]{conecting}, we have that $A/I$ is prime and so $I$ is prime.      
    \end{proof}
\end{theorem}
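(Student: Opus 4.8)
The plan is to prove the equivalence $I$ is prime $\iff$ $E/H_I$ is downward directed and $A/I$ is semiprime, leveraging the machinery already built in the excerpt together with the cited results from \cite{conecting}. The central observation is that primeness of $I$ is the same as primeness of the quotient algebra $A/I$, and $A/I$ is itself an evolution algebra (quotient by a basic ideal) whose associated graph is closely related to $E/H_I$. So the strategy is to reduce everything to the quotient and then apply Corollary~\ref{cor_prime_eq_down}, which already characterizes primeness of a \emph{semiprime} evolution algebra by the downward directed condition.

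For the forward implication (i) $\Rightarrow$ (ii), I would argue as follows. First, since $I$ is prime, Remark~\ref{primeabsorption} tells us that $I$ has the absorption property. Invoking \cite[Theorem 5.5]{conecting}, the absorption property forces $I=I_{H_I}$, so $I$ is a basic ideal determined by the hereditary set $H_I$. This is the key structural fact that makes the quotient $A/I$ an evolution algebra with a well-understood graph. Now $A/I$ is prime by definition of a prime ideal, hence in particular semiprime, giving the second half of (ii). For the first half, since $A/I$ is prime (and semiprime), Corollary~\ref{cor_prime_eq_down} applies to give that the graph associated to $A/I$ is downward directed; then \cite[Theorem 6.5]{conecting}, which identifies the graph of the quotient $A/I_H$ with the quotient graph $E/H$, lets me transfer this to the statement that $E/H_I$ is downward directed.

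For the converse (ii) $\Rightarrow$ (i), the argument runs the same chain of implications backwards. Assuming $E/H_I$ is downward directed, \cite[Theorem 6.5]{conecting} tells me the graph associated to $A/I$ is downward directed (again using that $I=I_{H_I}$ so that the quotient is the expected evolution algebra). Combined with the hypothesis that $A/I$ is semiprime, Corollary~\ref{cor_prime_eq_down} yields that $A/I$ is prime. By definition of a prime ideal, this means precisely that $I$ is prime.

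The main obstacle I anticipate is not the logical skeleton, which is a clean two-way application of Corollary~\ref{cor_prime_eq_down} sandwiched between two translation results from \cite{conecting}, but rather ensuring that the cited results apply cleanly and that the identification $I=I_{H_I}$ is legitimate in both directions. In the forward direction this identification comes for free from the absorption property via \cite[Theorem 5.5]{conecting}; in the converse direction one should be careful that the hypotheses already presuppose we are working with the ideal $I$ whose hereditary set is $H_I$, and that \cite[Theorem 6.5]{conecting} is stated for the relevant class of quotients so that the graph of $A/I$ genuinely coincides with $E/H_I$. Provided these external results are invoked with the correct hypotheses, no delicate computation is required and the proof is essentially a bookkeeping exercise linking the graph-theoretic condition on $E/H_I$ to primeness of the quotient.
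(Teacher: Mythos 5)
Your proposal is correct and follows essentially the same route as the paper: absorption property via Remark~\ref{primeabsorption}, the identification $I=I_{H_I}$ from \cite[Theorem 5.5]{conecting}, and a two-way application of Corollary~\ref{cor_prime_eq_down} together with \cite[Theorem 6.5]{conecting} to translate between the quotient graph and primeness of $A/I$. The caution you raise about the converse direction is reasonable but does not change the argument, which matches the paper's.
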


The characterization in Theorem \ref{charprimo} is useful for any evolution algebra. For instance, in order to determine the prime ideals of the algebra in Example \ref{ejemplonoprimo}, we select all the hereditary subsets $H$ of $B$ such that $E/H$ is downward directed. These are 
$$H_1=\{e_1,e_2\}, \ H_2=\{e_1,e_4,e_5\}, \ H_3=\{e_2,e_4,e_5\}  \text{ and } H_4=\{e_1,e_2,e_4, e_5\}.$$
Now, we select from these all those such that $A/I_{H_i}$ is semiprime and these will produce the hereditary subsets $H$ such that $I_H$ is prime. Note that $A/I_{H_1}$ is not a semiprime algebra because the ideal generated by the element $[e_4+e_5]$  verifies that its square is zero. On the other hand, the other three hereditary subsets that remain produce semiprime algebras as $A/I_{H_i}$ $(i=2,3,4)$  are perfect evolution algebras (Proposition \ref{prop_perfect_implies_semiprime}). So, the only prime ideals are $Ke_1\oplus Ke_4 \oplus Ke_5$, $Ke_2\oplus Ke_4\oplus Ke_5$ and $Ke_1\oplus Ke_2\oplus Ke_4\oplus Ke_5$.

\subsection{Absorption property} 
In this section, we study the absorption property for an evolution algebra of any dimension. In the finite-dimensional case, our results recover some of the results of \cite{cadavid2023}. In fact, the proofs in \cite{cadavid2023} do not seem to require the finite dimensionality hypothesis and our statements are equivalent to theirs.

We begin this section by providing a graphical characterization of ideals of an evolution algebra having the absorption property.

Before we proceed, notice that it is easy to check that for any ideal $I$ of a commutative algebra $A$, the following assertions are equivalent:
    \begin{enumerate} [\rm (i)]
        \item The ideal $I$ has the absorption property.
        \item  $\ann(A/I)=0$.
        
    \end{enumerate}

\begin{proposition} Let $A$ be an evolution algebra with associated graph $E$, and $I=I_H$ for some hereditary subset $H$. Then the following are equivalent:
\begin{enumerate} [\rm (i)]
\item $I$ has the absorption property.
\item $E/H$ is sinkless.
\end{enumerate}
\end{proposition}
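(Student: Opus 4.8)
The plan is to use the characterization noted just before the statement: for any ideal $I$ of a commutative algebra, $I$ has the absorption property if and only if $\ann(A/I)=0$. Since $I=I_H$ for a hereditary subset $H$, the quotient $A/I_H$ is again an evolution algebra, and its associated graph is precisely $E/H$ (this is the standard fact that quotienting by a basic ideal $I_H$ corresponds to removing the hereditary set $H$ of vertices and the edges incident to them). So the claim reduces to relating the zero-annihilator condition of $A/I_H$ to a graphical property of $E/H$.

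The key step is then to invoke Proposition~\ref{ostra}\eqref{otro}, which states that an evolution algebra is a zero annihilator algebra if and only if its associated graph is sinkless. Applying this to the evolution algebra $A/I_H$ with associated graph $E/H$ gives that $\ann(A/I_H)=0$ if and only if $E/H$ is sinkless. Chaining the two equivalences yields: $I$ has the absorption property $\iff \ann(A/I_H)=0 \iff E/H$ is sinkless, which is exactly the desired conclusion.

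Concretely, I would first record that $A/I_H$ is an evolution algebra with natural basis the images $\{[e_i] : e_i \notin H\}$ and structure constants inherited from those of $A$ (entries $\omega_{ji}$ with $e_i, e_j \notin H$), so that its associated graph is canonically identified with $E/H$; a vertex $[e_i]$ is a sink in $E/H$ exactly when $[e_i]^2 = 0$ in $A/I_H$, i.e. when $e_i^2 \in I_H$, equivalently $\sop(e_i^2) \subseteq H$. This is where I would verify that no spurious sinks or edges are created by the quotient, using hereditariness of $H$ to ensure the multiplication descends correctly. Then the proof is the short chain of equivalences above.

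The main obstacle is purely bookkeeping: justifying cleanly that $A/I_H$ really is an evolution algebra whose graph is $E/H$, and that sinks of $E/H$ correspond exactly to basis images $[e_i]$ with $e_i^2 \in I_H$. Once this identification is in place, the result is immediate from Proposition~\ref{ostra}\eqref{otro} and the pre-stated absorption/annihilator equivalence, so no genuinely hard analytic step remains. I would take care only to confirm that hereditariness of $H$ is what makes $I_H$ an ideal and guarantees the quotient graph is well-defined, matching the $E/S$ construction recalled in the Preliminaries.
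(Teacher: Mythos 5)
Your proposal is correct and follows the paper's own proof exactly: reduce absorption to $\ann(A/I_H)=0$, identify the graph of $A/I_H$ with $E/H$ (the paper cites a result from an earlier reference for this identification, whereas you verify it directly), and conclude via Proposition~\ref{ostra}(i).
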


\begin{proof}
     We know that $I$ has the absorption property  if and only if $\ann(A/I)=0$. Applying \cite[Theorem 6.5 item (3)]{conecting}, one graph associated to $A/I$ is $E/H$. Now, by Proposition~\ref{ostra} \eqref{otro}, we get that  $\ann(A/I)=0$ if and only if $E/H$ is sinkless. 
\end{proof}

 The absorption radical of an evolution algebra can be described in terms of the sinks in various layers of a graph associated to the algebra. We make this precise below.

Following \cite{chains}, recall that if $A$ is a $K$-algebra then  $\ann^{(1)}(A):=\ann(A)$ and $\ann^{(i)}(A)$ is defined, by recurrence, as $\ann(A/\ann^{(i)}(A))=\ann^{(i+1)}(A)/\ann^{(i)}(A)$ for $i\in \mathbb{N}^{\ast}$. In this way, we have the following monotone increasing chain of ideals: $$\ann^{(1)}(A) \subset \ann^{(2)}(A) \ldots.$$ It is called the \textit{upper annihilating series.} Observe that $$\ann^{(i)}(A)=\{x \colon (\ldots((x\underbrace{A)A)\ldots )A}_{i}=0\}.$$ If there exists $k\in \mathbb{N}^{\ast}$ such that $k=\text{min}\{ q \colon \ann^{(q)}(A)=\ann^{(q+1)}(A)\}$, then we call it the \emph{annihilator stabilizing index} of $A$, denoted by $\asi(A)$. Now, define $\ann^{(\infty)}(A):= \cup_{n \geq 1} \ann^{(n)}(A)$. Taking into account \cite[Proposition~3.7 item (iii)]{chains}, we obtain that $\ann^{(\infty)}(A)=\rad(A)$ if $\asi(A) < \infty$.

Given a graph $E$, define a \textit{$1$-sink} of $E$ as a sink. Then, for $n>1$ define recursively an \textit{$n$-sink} as a vertex $u\in E^0$ such that $u$ is a sink in the graph obtained removing all $i$-sinks with $i<n$ (and edges concomitant with those). We denote the set of all $n$-sinks by $\mathbb S_n$. 

In the following graph $E$, we have $\mathbb S_1=\{e_7,e_8\}$ and 
$E_1:=E/\mathbb{S}_1$.
\begin{center}
    $$  {E:   \xymatrix{ & & {\bullet}_{e_{7}} &  & \\      {\bullet}_{e_{1}}  \ar@(ul,ur) \ar[d] \ar[r] & {\bullet}_{e_{3}} \ar@(ul,ur)  \ar[dl]\ar[r]\ar[d]  & {\bullet}_{e_{4}}\ar[u] 
\ar[r] &{\bullet}_{e_{8}}
  & \\
         {\bullet}_{e_{2}}  \ar@(dr,dl)  &{\bullet}_{e_{5}}\ar[r] & {\bullet}_{e_{6}}\ar[u] &  & }}{ E_1:   \xymatrix{ & & &  & \\     {\bullet}_{e_{1}}  \ar@(ul,ur) \ar[d] \ar[r] & {\bullet}_{e_{3}} \ar@(ul,ur)  \ar[dl]\ar[r]\ar[d]  & {\bullet}_{e_{4}} 
 &
  & \\
         {\bullet}_{e_{2}}  \ar@(dr,dl)  &{\bullet}_{e_{5}}\ar[r] & {\bullet}_{e_{6}}\ar[u] &  & }}
       $$\end{center}
       \vspace{1cm}
       
       Consequently $\mathbb{S}_2=\{e_4\}$. So
       $E_2:=E_1/\mathbb{S}_2$ is the graph below and $\mathbb{S}_3=\{e_6\}$.

 $$
    {E_2:   \xymatrix{ & &    \\     {\bullet}_{e_{1}}  \ar@(ul,ur) \ar[d] \ar[r] & {\bullet}_{e_{3}} \ar@(ul,ur)  \ar[dl] \ar[d]  &  
 \\
         {\bullet}_{e_{2}}  \ar@(dr,dl)  &{\bullet}_{e_{5}}\ar[r] & {\bullet}_{e_{6}}    }}{E_3:   \xymatrix{ & &    \\     {\bullet}_{e_{1}}  \ar@(ul,ur) \ar[d] \ar[r] & {\bullet}_{e_{3}} \ar@(ul,ur)  \ar[dl] \ar[d]  &  \\
         {\bullet}_{e_{2}}  \ar@(dr,dl)  &{\bullet}_{e_{5}} &     }{E_4:   \xymatrix{ & &    \\     {\bullet}_{e_{1}}  \ar@(ul,ur) \ar[d] \ar[r] & {\bullet}_{e_{3}} \ar@(ul,ur)  \ar[dl]   & 
 \\
         {\bullet}_{e_{2}}  \ar@(dr,dl)  & &    } }}
       $$

\vspace{1cm}
Thus $\mathbb{S}_4=\{e_5\}$ and the graph $E_4$ is sinkless, so there is no more $n$-sinks.

With the setting above, we have the following graphical description of the absorption radical of an evolution algebra.

\begin{proposition}\label{paella}
    Let $A$ be an evolution $K$-algebra with associated graph $E$. If $\asi(A)< \infty$ then $\rad(A)=\Span(\bigcup_{n\geq 1}\{ \mathbb{S}_n \})$.
\end{proposition}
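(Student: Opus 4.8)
The plan is to compute the upper annihilating series term by term and match each term with a layer of sinks. Since $\asi(A)<\infty$, the cited \cite[Proposition~3.7 item (iii)]{chains} already gives $\rad(A)=\ann^{(\infty)}(A)=\bigcup_{n\geq 1}\ann^{(n)}(A)$; hence it will suffice to establish, for every $n\geq 1$, the identity
$$\ann^{(n)}(A)=\Span\Big(\bigcup_{i=1}^{n}\mathbb S_i\Big),$$
and then pass to the union, using that for an increasing chain of subsets the union of the spans coincides with the span of the union.

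I would prove this identity by induction on $n$. For the base case, I would invoke the Preliminaries identity $\ann(A)=\Span\{e_i\colon e_i^2=0\}$ and note that $e_i^2=0$ precisely when $e_i$ has no outgoing edge, i.e.\ when $e_i$ is a sink; thus $\ann^{(1)}(A)=\Span(\mathbb S_1)$. For the inductive step, write $H_n:=\bigcup_{i=1}^{n}\mathbb S_i$ and assume $\ann^{(n)}(A)=\Span(H_n)$.

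The first point to settle is that $H_n$ is hereditary. An $m$-sink is, by definition, a sink of the graph obtained after deleting $\mathbb S_1\cup\cdots\cup\mathbb S_{m-1}$, so all of its outgoing edges in $E$ land in $\mathbb S_1\cup\cdots\cup\mathbb S_{m-1}$; an easy inner induction then shows every descendant of a vertex of $H_n$ again lies in $H_n$. Hence $\Span(H_n)=I_{H_n}$ is a basic ideal, and by \cite[Theorem 6.5 item (3)]{conecting} the graph associated to $A/\ann^{(n)}(A)=A/I_{H_n}$ is exactly $E/H_n=E_n$. Now, by definition of the series, $\ann^{(n+1)}(A)/\ann^{(n)}(A)=\ann(A/I_{H_n})$, and applying the annihilator description to the evolution algebra $A/I_{H_n}$ shows that this ideal is spanned by the cosets of the sinks of $E_n$. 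By construction those sinks are precisely the $(n+1)$-sinks $\mathbb S_{n+1}$, so taking the full preimage along the quotient map $\pi\colon A\to A/I_{H_n}$ yields $\ann^{(n+1)}(A)=I_{H_n}\oplus\Span(\mathbb S_{n+1})=\Span(H_{n+1})$, which closes the induction.

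I expect the main obstacle to be the careful bookkeeping in the inductive step rather than any deep difficulty: one must verify that $H_n$ is hereditary (so that the quotient graph is literally $E_n$ and the structural result of \cite{conecting} applies), and that the sinks appearing in $\ann(A/I_{H_n})$ are exactly the freshly created $(n+1)$-sinks and not vertices already removed at earlier stages. Once these two combinatorial points are pinned down, the reduction through \cite{chains} and the final passage to the union are routine.
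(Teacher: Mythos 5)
Your proposal is correct and takes exactly the same route as the paper's own proof, which is just a two-line sketch asserting that $\rad(A)=\ann^{(\infty)}(A)$ and that each $\ann^{(n)}(A)$ is generated by the sink layers $\mathbb S_1,\ldots,\mathbb S_n$. Your induction (base case via $\ann(A)=\Span\{e_i\colon e_i^2=0\}$, inductive step via heredity of $H_n$ and the quotient graph $E/H_n$) supplies precisely the details the paper leaves implicit.
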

\begin{proof}
    This follows from $\rad(A)= \ann^{(\infty)}(A)$ and the fact that each $\ann^{(n)}(A)$ is generated by the $\mathbb{S}_n$.
\end{proof}

Notice that if $A$ is an evolution algebra with an associated graph equal to the graph $E$ given in the example above Proposition~\ref{paella}, then $\rad(A)=\Span \{e_4, e_5, e_6, e_7, e_8\}$.

\begin{remark} \rm
    The reader can check that the vertices that span the absorption radical of an evolution algebra, as in Proposition~\ref{paella}, are the ones that are not the basis of a closed path in the graph.
\end{remark}

\subsection{Von Neumann regularity} 

In this subsection, we study the von Neumann regularity property. We begin by searching necessary conditions for an element $x$ to be von Neumann regular in a finite evolution algebra $A$ with natural basis $B=\{e_i\}$ and structure matrix $M_B=(\omega_{ij})$.

Write $x = \sum_{i=1}^n x_ie_i$. Let us find a von Neumann inverse for $x$ in $A$. If we consider $y = \sum_{j = 1}^n y_je_j$, then $xy = \sum_{i=1}^n x_iy_ie_i^2 = \sum_{i,k = 1}^nx_iy_i\w_{ki}e_k$ and $xyx = \sum_{i,k=1}^nx_iy_i\w_{ki}x_ke_k^2 = \sum_{i,k,j=1}^n x_iy_i\w_{ki}x_k\w_{jk}e_j$. If $y$ is the von Neumann inverse of $x$, then we have that $xyx = x$, that is $\sum_{i,k=1}^n x_iy_i\w_{ki}x_k\w_{jk} = x_j$ for every $j = 1,\ldots,n$. We can rewrite these equations as
\begin{equation*}
    \sum_{i=1}^n \left ( \sum_{k=1}^n x_i\w_{ki}x_k\w_{jk}\right )y_i = x_j,
\end{equation*}
which is a linear system of equations depending on the parameters of $x$. We can write this system again in a different way. It is not difficult to check that $(M_B \cdot {\rm diag}(x))_{ij} = x_j\w_{ij}$. Then, 
\begin{equation*}
    \sum_{k=1}^n x_i\w_{ki}x_k\w_{jk} = \sum_{k=1}^n (M_B \cdot {\rm diag}(x))_{ki}(M_B\cdot {\rm diag}(x))_{jk} = (M_B \cdot {\rm diag}(x))^2_{ji},
\end{equation*}
where ${\rm diag}(x)=x \cdot {\rm Id}$ with ${\rm Id}$  being the identity matrix. Therefore, the coefficient matrix of the previous nonhomogeneous linear system is $(M_B\cdot {\rm diag}(x))^2$. From this, we obtain the following result.
\begin{proposition}
    If $A$ is a perfect finite dimensional evolution $K$-algebra, then all the elements outside the hyperplanes $x_i= 0$ are von Neumann regular. 
\end{proposition}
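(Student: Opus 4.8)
The plan is to read off the result directly from the linear system derived immediately before the statement. Recall that an element $x=\sum_{i=1}^n x_ie_i$ is von Neumann regular precisely when the nonhomogeneous linear system $(M_B\cdot\diag(x))^2\,y = x$, in the unknown coordinate vector $y=(y_1,\dots,y_n)^{T}$, admits a solution; indeed, any such solution $y$ yields a von Neumann inverse $\sum_{j} y_j e_j$ for $x$, since the computation preceding the proposition shows that the equations $xyx=x$ are equivalent to this system.

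First I would observe that lying ``outside the hyperplanes $x_i=0$'' means precisely that every coordinate $x_i$ is nonzero, and hence $\diag(x)$ is an invertible diagonal matrix. Next, since $A$ is a perfect finite-dimensional evolution algebra, the criterion recalled in the preliminaries gives $\vert M_B\vert\neq 0$, so $M_B$ is invertible. Consequently the product $M_B\cdot\diag(x)$ is invertible, and therefore so is its square $(M_B\cdot\diag(x))^2$.

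Finally, since the coefficient matrix $(M_B\cdot\diag(x))^2$ of the system is invertible, the system $(M_B\cdot\diag(x))^2\,y = x$ has a (unique) solution $y$, which furnishes the required von Neumann inverse. This proves that every $x$ with all coordinates nonzero is von Neumann regular.

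I do not anticipate a genuine obstacle here: the entire substance of the argument is the earlier identification of the coefficient matrix as $(M_B\cdot\diag(x))^2$, after which the conclusion reduces to the elementary fact that a product of invertible matrices is invertible. The only points requiring a little care are to verify that the coordinate conditions $x_i\neq 0$ for all $i$ are exactly what makes $\diag(x)$ invertible, and to invoke the perfectness hypothesis in its equivalent form $\vert M_B\vert\neq 0$ so that $M_B$ may be inverted.
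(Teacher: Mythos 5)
Your argument is correct and is exactly the one the paper intends: the displayed computation before the proposition identifies the equations $xyx=x$ with the linear system whose coefficient matrix is $(M_B\cdot\diag(x))^2$, and the proposition then follows because perfection gives $\vert M_B\vert\neq 0$ while the condition $x_i\neq 0$ for all $i$ makes $\diag(x)$ invertible, so the system is solvable. The paper leaves this last step implicit ("From this, we obtain the following result"), and you have simply spelled it out; there is no gap.
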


Next, we describe the natural basis elements of an evolution algebra, which are von Neumann regular.

\begin{lemma}
Let $A$ be an evolution $K$-algebra, and $B=\{e_i\}_{i\in \Lambda}$ a natural basis of $A$. An element $e_i$ of $B$ is von Neuman regular if and only if $e_i^2\in K^\times e_i$.    
\end{lemma}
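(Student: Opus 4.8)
The plan is to characterize von Neumann regularity of a basis element $e_i$ directly from the defining equation $e_iye_i = e_i$, exploiting the evolution algebra structure to pin down what $y$ must look like. First I would take an arbitrary $y = \sum_{j \in \Lambda} y_j e_j \in A$ and compute $e_i y e_i$ explicitly. Since $e_i e_j = 0$ for $j \neq i$, we have $e_i y = y_i e_i^2$. Writing $e_i^2 = \sum_{q \in \Omega_i} \w_{qi} e_q$ with $\Omega_i := \sop(e_i^2)$, the next product $(e_i y) e_i = y_i e_i^2 e_i = y_i \left(\sum_{q \in \Omega_i} \w_{qi} e_q\right) e_i$. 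Because the basis is natural, the only surviving term in this last product is the one with $q = i$, so $(e_iy)e_i = y_i \w_{ii} e_i^2$. Thus the von Neumann condition $e_i y e_i = e_i$ becomes $y_i \w_{ii}\, e_i^2 = e_i$.

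Now I would read off both directions from this single identity. For the ``if'' direction, suppose $e_i^2 \in K^\times e_i$, say $e_i^2 = \lambda e_i$ with $\lambda \neq 0$; note this forces $\w_{ii} = \lambda$ and $\Omega_i = \{i\}$. Choosing $y = \lambda^{-2} e_i$ (so $y_i = \lambda^{-2}$) gives $y_i \w_{ii} e_i^2 = \lambda^{-2} \cdot \lambda \cdot \lambda e_i = e_i$, exhibiting a von Neumann inverse. For the ``only if'' direction, suppose $e_i$ is von Neumann regular, so there is $y$ with $y_i \w_{ii}\, e_i^2 = e_i$. The right-hand side $e_i$ is a nonzero scalar multiple of the single basis vector $e_i$, so the left-hand side must be as well; since $e_i^2 = \sum_{q \in \Omega_i} \w_{qi} e_q$ and $y_i \w_{ii} \neq 0$ (otherwise the left side is zero), comparing coordinates in the natural basis forces $\Omega_i = \{i\}$, i.e.\ $e_i^2 = \w_{ii} e_i$ with $\w_{ii} \neq 0$. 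Hence $e_i^2 \in K^\times e_i$, as claimed.

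The only genuinely delicate point, and the step I would be most careful about, is the repeated use of the defining property $e_i e_j = 0$ for $i \neq j$ when simplifying the triple product $(e_i^2) e_i$: one must verify that multiplying $e_i^2 = \sum_q \w_{qi} e_q$ on the right by $e_i$ kills every term except $q = i$, which is exactly what the natural basis guarantees but which is the crux of why the condition collapses to a statement about a single coordinate. Everything else is a routine coordinate comparison in the natural basis; there is no dimensional or finiteness hypothesis needed, so the argument works for $\Lambda$ of arbitrary cardinality.
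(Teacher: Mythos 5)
Your proof is correct and follows essentially the same route as the paper: both reduce $e_i y e_i = e_i$ to the single coordinate identity $y_i\,\w_{ii}\,e_i^2 = e_i$ by using $e_ie_j=0$ for $j\neq i$ twice, and then read off both directions from that equation. No gaps; the treatment of the triple product and the nonvanishing of $y_i\w_{ii}$ match the paper's argument.
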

\begin{proof}
    
If $e_i^2=\lambda e_i$ with $\lambda \in K^{\times}$, then $e_i (\frac{1}{\lambda^2} e_i) e_i = e_i$. For the converse, suppose that $e_i$ is regular, i.e., $e_ixe_i=e_i$ for some $x\in A$, and write $x=\sum_j \l_j e_j$. Then, 
$\l_i e_i^2 e_i=e_i$. Since $e_i^2=\sum_j\w_{ji}e_j$,  we obtain $\sum_j\l_i \w_{ji}e_j e_i=e_i$, that is, $\l_i\w_{ii}e_i^2=e_i$. Notice that $ \l_i\w_{ii}\neq 0$ since otherwise $e_i=0$.
 So, $\l_i =\w_{ii}^{-2}$ and $\w_{ji}=0$ if $j\ne i$. Hence $e_i^2=\w_{ii}e_i.$ 
 \end{proof}

 From the above paragraph, we obtain that if $A$ is von Neumann regular and $B=\{e_i\}$ a natural basis, then each element $e_i\in B$  satisfies $e_i^2\in K^\times e_i$ and hence the algebra is perfect and moreover associative, since $e_i^2e_j=e_i(e_ie_j)=0$ for $i\ne j$ (see \cite[Lema 3.2.1]{power}). Furthermore, each $Ke_i$ is a simple ideal of $A$, and $A$  is a direct sum of copies of the field. Thus, we have the following combinatorial characterization of the von Neumann regularity property.

 \begin{corollary}
 An evolution algebra $A$ is von Neumann regular if and only if every associated graph consists of isolated loops.
 \end{corollary}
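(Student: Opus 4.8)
The plan is to prove the final corollary, namely that an evolution algebra $A$ is von Neumann regular if and only if every associated graph consists of isolated loops, by reducing the global statement to the basis-wise characterization already established in the preceding lemma. First I would prove the forward direction. Assume $A$ is von Neumann regular and fix any natural basis $B=\{e_i\}$. Since every element of $A$ is von Neumann regular, in particular each basis element $e_i$ is, so the previous lemma applies and gives $e_i^2\in K^\times e_i$ for every $i$. This says exactly that in the associated graph $E$ each vertex $e_i$ has a loop (the diagonal structure constant $\w_{ii}$ is nonzero) and no other outgoing edges ($\w_{ji}=0$ for $j\neq i$). As observed in the paragraph preceding the corollary, this forces $e_i^2 e_j = e_i(e_ie_j)=0$ for $i\neq j$, so distinct loops are disconnected from one another: there is no edge joining vertex $e_i$ to vertex $e_j$ in either direction when $i\neq j$. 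Hence $E$ consists precisely of isolated loops. Since $B$ was an arbitrary natural basis, every associated graph has this form.

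For the converse, suppose some associated graph $E$ (relative to a natural basis $B=\{e_i\}$) consists of isolated loops. This means that for each $i$ we have $e_i^2=\w_{ii}e_i$ with $\w_{ii}\neq 0$ and no off-diagonal edges, so $\w_{ji}=0$ for $j\neq i$. The key structural point, already noted in the text, is that $A$ then decomposes as a direct sum $A=\bigoplus_i Ke_i$ of one-dimensional ideals, each isomorphic to the base field $K$ via $e_i\mapsto \w_{ii}^{-1}e_i$ being an idempotent spanning $Ke_i$; in particular $A$ is associative. To verify von Neumann regularity of an \emph{arbitrary} element, write $x=\sum_i x_i e_i$. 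Because multiplication is ``coordinate-wise'' in this basis ($e_ie_j=0$ for $i\neq j$ and $e_i^2=\w_{ii}e_i$), the element $x$ lies in the product ring $\prod_i Ke_i$ and one simply exhibits a von Neumann inverse coordinate by coordinate: set $y=\sum_i y_i e_i$ where $y_i=\w_{ii}^{-2}x_i^{-1}$ if $x_i\neq 0$ and $y_i=0$ otherwise. A direct computation using $xy=\sum_i x_iy_i\w_{ii}e_i$ and $xyx=\sum_i x_i^2 y_i \w_{ii}^2 e_i$ then gives $xyx=x$, confirming that $x$ is von Neumann regular. Since $x$ was arbitrary, $A$ is von Neumann regular.

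The main obstacle, or rather the only subtle point, is handling coordinates where $x_i=0$ in the converse: one cannot naively invert, and one must check that setting $y_i=0$ there does no harm. This is resolved by the coordinate-wise nature of multiplication, which guarantees that the $i$-th coordinate of $xyx$ depends only on $x_i$ and $y_i$ and automatically equals $0=x_i$ when $x_i=0$. Everything else follows formally from the basis-level lemma and the direct-sum decomposition, so the corollary is essentially a matter of assembling those facts; I would keep the proof short, invoking the lemma and the paragraph preceding the corollary rather than re-deriving the associativity and decomposition of $A$.
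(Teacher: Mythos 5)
Your proof is correct, and the forward direction is exactly the paper's: apply the preceding lemma to each basis element to get $e_i^2\in K^\times e_i$, which is precisely the statement that the graph is a disjoint union of loops. For the converse you diverge slightly: the paper observes that an algebra whose graph consists of isolated loops is associative and semisimple (a direct sum of copies of $K$) and then invokes the standard fact that semisimple associative rings are von Neumann regular, whereas you verify regularity directly by exhibiting a coordinatewise inverse $y_i=\omega_{ii}^{-2}x_i^{-1}$ (with $y_i=0$ when $x_i=0$) and checking $xyx=x$. Your computation is right, including the handling of zero coordinates, and it buys a self-contained argument that avoids the external citation; the paper's version buys brevity and places the result in the familiar associative context. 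Both rest on the same decomposition $A=\bigoplus_i Ke_i$, so the difference is only in the final step.
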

 \begin{proof}
If A is von Neumann regular, then the result follows by the paragraph above. Conversely, an evolution algebra $A$ whose graphs consist of isolated loops is associative and semisimple, and hence von Neumann regular (see \cite[Corollary~4.24]{Lam}).
 \end{proof}

\begin{remark}\rm 
Consider an evolution $K$-algebra $A$ with natural basis $B=\{e_i\}_{i\in\Lambda}$ and associated graph $E$. We can consider the linear span of the set $\reg(B):=\{e_i\in B\colon e_i \text{ is von Neumann regular}\}$. It turns out that this linear span is an ideal of $A$ and a direct summand since it is generated by a subset of a natural basis. If we denote it by $I$,  we have a direct sum decomposition $A=I\oplus U$, where $U$ is a subspace. 
 The ideal $I$ is an evolution algebra with a natural basis given by the set of regular elements of $B$. Furthermore, it is von Neumann regular, and its associated graph is a set of isolated loops. The graph $E$ associated to $A$ relative to $B$ also contains these loops,  but typically, it will include other vertices that can eventually connect to the isolated loops.

\end{remark}

\section{The centroid of an evolution algebra}
In this section, we study the centroid of an evolution algebra and its connection with any graph associated to the algebra. Our first result describes the centroid of zero annihilator algebras that can be split into direct sums of ideals. Before we state the result, we need to recall a few definitions. 
Given a $K$-algebra $A$, a \textit{centralizer} $T \colon A \to A$ is a morphism of $A$-bimodules. In other words, $T$ is a linear application such that for every $x,y\in A$, $T(xy) = xT(y) = T(x)y$. The set of all centralizers, denoted by $\C(A)$, is called the \textit{centroid of $A$}. It is an algebra equipped with the sum and composition of applications. If $A$ is a prime algebra, $\C(A)$ is an extension field of $K$.

\begin{proposition}\label{rocks}
 Let $A$ be a zero annihilator $K$-algebra, which splits as a direct sum of ideals $A=\bigoplus_{\a\in\Lambda} I_\a$. Then:
 \begin{enumerate}[\rm (i)]
 \item Each $I_\a$, considered as an algebra on its own, is zero annihilator and, for each $\a$, we have 
 $\displaystyle \ann_A(I_\a)=\bigoplus_{\b \in \Lambda, \ \b\ne\a}I_\b$.
 \item \label{it2} If $T  $ is an element of the centroid $\C(A)$ then each $I_\a$ is $T$-invariant.  
 \item The centroid $\C(A)$ is isomorphic to the product $\prod_\a \C(I_\a)$.
 \end{enumerate}
\end{proposition}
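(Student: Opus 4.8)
The plan is to prove the three items in order, using the zero annihilator hypothesis crucially to pin down how centralizers interact with the direct sum decomposition. Throughout I will write elements of $A$ as finite sums $x=\sum_\a x_\a$ with $x_\a\in I_\a$, using that $I_\a I_\b\subseteq I_\a\cap I_\b=0$ for $\a\neq\b$ since the sum is direct and each $I_\a$ is an ideal.

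For item (i), I would first check that $\bigoplus_{\b\neq\a}I_\b\subseteq\ann_A(I_\a)$, which is immediate from $I_\a I_\b=0$ for $\b\neq\a$. For the reverse inclusion, take $z=\sum_\b z_\b\in\ann_A(I_\a)$; then for every $y\in I_\a$ we have $zy=z_\a y=0$ and $yz=yz_\a=0$, so $z_\a\in\ann_{I_\a}(I_\a)=\ann(I_\a)$. Thus it suffices to show each $I_\a$ is zero annihilator on its own. If $w\in\ann(I_\a)$, then $w$ annihilates $I_\a$; and since $w\in I_\a$ kills every other $I_\b$ automatically, $w\in\ann(A)=0$. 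This simultaneously gives that each $I_\a$ is zero annihilator and forces $z_\a=0$, yielding $z\in\bigoplus_{\b\neq\a}I_\b$.

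For item (\ref{it2}), let $T\in\C(A)$ and fix $\a$. I want $T(I_\a)\subseteq I_\a$. Take $x\in I_\a$ and write $T(x)=\sum_\b y_\b$. The key computation: for any $u\in I_\a$, the bimodule property gives $T(x)u=T(xu)\in I_\a$ and also $T(x)u=\big(\sum_\b y_\b\big)u=y_\a u$, since $y_\b u\in I_\b I_\a=0$ for $\b\neq\a$. Hence $y_\b u=0$ for all $\b\neq\a$ and all $u\in I_\a$; symmetrically $uy_\b=0$. This shows $y_\b\in\ann_A(I_\a)$ for $\b\neq\a$, which we already know equals $\bigoplus_{\gamma\neq\a}I_\gamma$, consistent with $y_\b\in I_\b$. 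The real leverage comes from pushing $y_\b$ into $\ann(A)$: I would argue that $y_\b$ (for $\b\neq\a$) annihilates $I_\a$ by the above, and annihilates every $I_\gamma$ with $\gamma\neq\b$ trivially, and must also annihilate $I_\b$ itself. To get the last point one uses that $T$ respects all of $A$; applying the centralizer identity with elements of $I_\b$ and comparing components shows the off-diagonal pieces are absolute annihilators, hence zero by the zero annihilator hypothesis. This is the step I expect to be the main obstacle, since it is where the zero annihilator assumption must be invoked in a slightly nonobvious way rather than just using orthogonality of the ideals.

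For item (iii), once (\ref{it2}) gives that every $T\in\C(A)$ preserves each $I_\a$, I would define the restriction map $\Phi\colon\C(A)\to\prod_\a\C(I_\a)$ by $\Phi(T)=(T|_{I_\a})_\a$, first checking that each $T|_{I_\a}$ is indeed a centralizer of $I_\a$ (the bimodule identity restricts cleanly because $I_\a$ is a subalgebra and $T$-invariant). This $\Phi$ is clearly an algebra homomorphism for pointwise sum and composition. Injectivity follows because $A=\bigoplus_\a I_\a$, so $T$ is determined by its restrictions. For surjectivity, given a family $(T_\a)_\a$ with $T_\a\in\C(I_\a)$, I would define $T$ on $A$ by $T(\sum_\a x_\a)=\sum_\a T_\a(x_\a)$ and verify $T\in\C(A)$: linearity is clear, and the bimodule property $T(xy)=xT(y)=T(x)y$ reduces, by expanding into components and using $I_\a I_\b=0$ for $\a\neq\b$, to the diagonal identities $T_\a(x_\a y_\a)=x_\a T_\a(y_\a)=T_\a(x_\a)y_\a$, which hold since $T_\a\in\C(I_\a)$. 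Thus $\Phi$ is an algebra isomorphism, completing the proof.
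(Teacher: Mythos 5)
Your overall route is the same as the paper's: prove (i) directly, use it to show centralizers preserve the summands, then set up the restriction map $\Phi$ and its componentwise inverse. Items (i) and (iii) are correct and complete, and match the paper's argument.

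In item (ii), however, you stop short of the decisive step, and the computation you do carry out is vacuous: testing $T(x)$ against $u\in I_\a$ only tells you that $y_\b u\in I_\b I_\a=0$ for $\b\ne\a$, which you already know from $y_\b\in I_\b$, so it places no constraint on $y_\b$. The step you flag as ``the main obstacle'' --- showing that $y_\b$ also annihilates $I_\b$ itself --- is in fact immediate, and it is exactly what the paper does: test against $I_\b$ rather than $I_\a$. For $u\in I_\b$ with $\b\ne\a$ one has $xu\in I_\a I_\b=0$, hence $0=T(xu)=T(x)u=\sum_\gamma y_\gamma u=y_\b u$ (all other terms die in $I_\gamma I_\b=0$), and symmetrically $uy_\b=0$; therefore $y_\b\in\ann_{I_\b}(I_\b)=0$ by item (i), so $T(x)=y_\a\in I_\a$. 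With that one line inserted in place of your heuristic description, your proof of (ii) coincides with the paper's, and the whole argument is complete.
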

\begin{proof}
The first assertion is clear by the definition of direct sum.

For the second assertion notice that $T(\ann_A(I_\a))\subset\ann_A(I_\a)$ for any $\a$. So, if we take an arbitrary $x\in I_\a$ and write $T(x)=\sum_\gamma z_\gamma$ (where $z_\gamma\in I_\gamma$), then for any $\b\ne \a$ we have $I_\b T(x)=T(I_\b x)=0$
whence $I_\b z_\b=0$ (and similarly $z_\b I_\b=0$). So, $z_\b\in\ann_{I_\b}(I_\b)=0$. Consequently $T(x)=z_\a\in I_\a$. For the third assertion, notice that we can construct a map  $\C(A)\overset{\w}{\to}\prod_{\a \in \Lambda} \C(I_\a)$ such that $T\mapsto (T\vert_{I_\a})$, where $T\vert_{I_\a}$ denotes the restriction of $T$ to $I_\a$, which is well defined by item \eqref{it2}. This map is a homomorphism of algebras, and it can be checked as a monomorphism. To see that it is also surjective, take an arbitrary collection $T_\a\in\C(I_\a)$ and define $T\colon A\to A$ as the map such that $T(\sum_\a x_\a):=\sum_\a T_\a(x_\a)$. Then, it can be easily proved that $T\in\C(A)$ and $\w(T)=(T_\a)_\a$.
\end{proof}

Let $A$ be an evolution algebra with natural basis $B = \{e_i\}_{i\in \Lambda}$ and structure matrix $M=(\w_{ij})$. Notice that the conditions for a linear map $T\colon A\to A$ to be a centralizer reduce to $T(e_i)e_j =0$ and $T(e_i^2)=T(e_i)e_i$ for every basis element $e_i\neq e_j$. If $T \in \C(A)$, we can write  $T(e_i) = \sum_{j \in \Lambda}t_{ji}e_j$. Thus, $0 = T(e_ie_j) = e_iT(e_j) =e_i\sum_{k \in \Lambda} t_{kj}e_k = t_{ij}e_i^2 = \sum_{k\in \Lambda} t_{ij}\w_{ki}e_k,$ which implies that 
\begin{equation}\label{primped}
\w_{ki}t_{ij} = 0\ \text{ for every $k \in \Lambda$ and $i \neq j$}.
\end{equation} 
In addition, we also have that  $T(e_i^2) = T \left (\sum_{j \in \Lambda} \w_{ji} e_j \right) = \sum_{j,k \in \Lambda} t_{kj}\w_{ji}e_k$ and  $e_iT(e_i) = t_{ii}e_i^2 = \sum_{k\in \Lambda} \w_{ki}t_{ii}e_k$, which leads to 
\begin{equation}\label{ec1}
\w_{ki}t_{ii}-\sum_{j \in \Lambda} t_{kj}\w_{ji} = 0, \textrm{ for every } i,k \in \Lambda.
\end{equation}

 The system of equations in the variables $\{t_{ij}\}_{i,j\in \Lambda}$, obtained in \eqref{ec1} jointly with \eqref{primped}, defines a linear homogeneous system of equations with solutions other than the trivial one since the identity is always a centralizer. 

Now, we recall the definition of $C(E^0,K)$ given in the paragraph above Remark~\ref{isomap}, where $E=(E^0,E^1,r_E,s_E)$ is the graph associated to the evolution algebra $A$ relative to $B$. Consider $f\in C(E^0,K)$. We define a linear map $T\colon A\to A$ such that 
$T(e_i)=f(e_i)e_i$ for any $i \in \Lambda$.
So $T(e_i)e_j=(f(e_i)e_i)e_j=0$ when $i\ne j$, and  $T(e_i)e_i=f(e_i)e_i^2=\sum_j f(e_i)\omega_{ji}e_j=\sum_j f(e_j)\omega_{ji}e_j=\sum_j\omega_{ji}T(e_j)=T(e_i^2)$ (take into account that $f(e_i)=f(e_j)$ if $\omega_{ji}\ne 0$). Summarizing, we have a map $C(E^0,K)\to \C(A)$ such that $f\mapsto T$.
It is easy to check that this is a $K$-algebra homomorphism, and further it is a monomorphism since $T=0$ implies $f(e_i)=0$ for any $i$. We state this result below.

\begin{lemma}\label{atarazana} Let $A$ be an evolution algebra with natural basis $B=\{e_i\}_{i\in \Lambda}$.
There is a $K$-algebra monomorphism $\Omega \colon C(E^0,K)\to\C(A)$ such that $f\mapsto T$, where $T$ is defined by
$T(e_i)=f(e_i)e_i$ for any $i \in \Lambda$.
\end{lemma}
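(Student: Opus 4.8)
The plan is to verify that the map $\Omega \colon C(E^0,K)\to \C(A)$, defined by sending $f$ to the linear operator $T$ with $T(e_i)=f(e_i)e_i$, is a well-defined $K$-algebra monomorphism. Since the paragraph preceding the statement has already carried out the central computation, my job is mainly to organize that computation into the three required claims: that $T$ genuinely lands in $\C(A)$, that $\Omega$ is a $K$-algebra homomorphism, and that it is injective.

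First I would check that $T\in\C(A)$. Recall that for an evolution algebra the centralizer conditions reduce to $T(e_i)e_j=0$ for $i\neq j$ and $T(e_i^2)=T(e_i)e_i$. The first condition is immediate since $T(e_i)e_j=f(e_i)e_ie_j=0$ when $i\neq j$, using that $B$ is a natural basis. For the second, I would compute $T(e_i)e_i=f(e_i)e_i^2=\sum_{j\in\Lambda} f(e_i)\w_{ji}e_j$ and compare it with $T(e_i^2)=\sum_{j\in\Lambda}\w_{ji}T(e_j)=\sum_{j\in\Lambda}\w_{ji}f(e_j)e_j$. The key point making these equal is that whenever $\w_{ji}\neq 0$ there is an edge from $e_i$ to $e_j$, so $e_i\equiv e_j$ and hence $e_i$, $e_j$ lie in the same connected component; since $f\in C(E^0,K)$ is constant on connected components (as established in the paragraph introducing $C(E^0,K)$, via the $\top$ topology), we get $f(e_i)=f(e_j)$ for all such $j$, which forces the two expressions to coincide.

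Next I would confirm that $\Omega$ is a $K$-algebra homomorphism. Linearity in $f$ is clear from the defining formula $T(e_i)=f(e_i)e_i$. For multiplicativity, if $f,g\in C(E^0,K)$ correspond to $T_f,T_g$, then on basis elements $(T_f\circ T_g)(e_i)=T_f(g(e_i)e_i)=g(e_i)f(e_i)e_i=(fg)(e_i)e_i$, which is exactly the operator attached to the pointwise product $fg$; since composition is the multiplication in $\C(A)$ and pointwise product is the multiplication in $C(E^0,K)$, this shows $\Omega(fg)=\Omega(f)\Omega(g)$. The unit map $f\equiv 1$ sends to the identity centralizer, so $\Omega$ is unital as well.

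Finally, injectivity is the easiest point: if $\Omega(f)=T=0$, then $0=T(e_i)=f(e_i)e_i$ for every $i\in\Lambda$, and since each $e_i\neq 0$ this gives $f(e_i)=0$ for all $i$, i.e.\ $f=0$. I do not anticipate a serious obstacle here; the only subtlety worth stating carefully is the step where continuity of $f$ (equivalently, constancy along connected components) is invoked to guarantee $f(e_i)=f(e_j)$ whenever $\w_{ji}\neq 0$, and I would make explicit that an edge $e_i\to e_j$ yields $e_i\overset{\tiny 1}{\equiv} e_j$ and hence $e_i\equiv e_j$, so that this is precisely where the hypothesis $f\in C(E^0,K)$ is used rather than taking $f$ to be an arbitrary function on $E^0$.
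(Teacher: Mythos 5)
Your proposal is correct and follows essentially the same route as the paper: verify $T(e_i)e_j=0$ for $i\neq j$, verify $T(e_i^2)=T(e_i)e_i$ using that $f$ is constant on connected components (so $f(e_i)=f(e_j)$ whenever $\w_{ji}\neq 0$), and note that the homomorphism and injectivity checks are immediate. The only difference is that you spell out the multiplicativity computation, which the paper leaves as "easy to check."
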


In the specific case of a zero annihilator evolution algebra, we have that for all $i\in \Lambda$ there exists $k\in \Lambda$ such that $\w_{ki} \neq 0$. Moreover,  if $T \in \C(A)$ with  $T(e_i) = \sum_{j \in \Lambda}t_{ji}e_j$ hence applying  \eqref{primped}, we get $t_{ij}=0$ for all $j \neq i$ and for every $i \in \Lambda$. Therefore, $\w_{ki}t_{ii}-\sum_{j \in \Lambda} t_{kj}\w_{ji}=\w_{ki}(t_{ii}-t_{kk})=0$ for every  $i,k \in \Lambda$. Thus, for a zero annihilator
evolution algebra, the equivalent conditions for $T$ to be a centralizer simplify to:
\begin{equation}\label{sistcinc}
\begin{cases}
    t_{ij}=0 & \text{for any} \ i\ne j,\\
    \w_{ki}(t_{ii}-t_{kk})=0 & \text{for any}\ i,k.
\end{cases}
\end{equation}
So, in this case, the centralizers are diagonal maps relative to a natural basis, and any two of them commute. Furthermore, if $e_i$ and $e_j$ are in the same connected component of the graph relative to $B$, then $t_{ii} = t_{jj}$.   The centroid $\C(A)$ is therefore an associative, commutative subalgebra of $\hbox{End}_K(A)$ (the algebra of all $K$-linear maps $A\to A$). Moreover, we have the following characterization of the centroid.

\begin{lemma}
       \label{isomega}
    If $A$ is a zero annihilator evolution algebra, then the map $\Omega$ defined in Lemma~\ref{atarazana} is an isomorphism. Consequently, $\C(A)$ is isomorphic to $K^{\mathfrak{C}}$, where  $\mathfrak{C}$ is the set of connected components of any graph associated to $A$.
\end{lemma}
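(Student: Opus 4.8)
The plan is to establish surjectivity of $\Omega$, since Lemma~\ref{atarazana} already provides that $\Omega \colon C(E^0,K) \to \C(A)$ is a $K$-algebra monomorphism; once surjectivity is in hand, $\Omega$ is an isomorphism. So I would start from an arbitrary $T \in \C(A)$ and invoke the simplification of the centralizer equations recorded in \eqref{sistcinc}, valid precisely because $A$ has zero annihilator. This gives two things at once: $T$ is diagonal relative to the natural basis $B$, i.e. $T(e_i) = t_{ii}e_i$ for every $i$, and the scalars satisfy $\w_{ki}(t_{ii}-t_{kk}) = 0$ for all $i,k \in \Lambda$.

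Next I would reinterpret the relation $\w_{ki}(t_{ii}-t_{kk})=0$ combinatorially. Since by construction the associated graph $E$ has an edge from $e_i$ to $e_k$ exactly when $\w_{ki} \neq 0$, this relation forces $t_{ii} = t_{kk}$ whenever $e_i$ and $e_k$ are joined by an edge; as $1$-connection $\overset{\tiny 1}{\equiv}$ is symmetric in source and range, the equality $t_{ii}=t_{kk}$ holds whenever $e_i \overset{\tiny 1}{\equiv} e_k$. Defining $f \colon E^0 \to K$ by $f(e_i) := t_{ii}$, it follows immediately that $f$ is constant along every $1$-connection and hence, iterating along chains, constant along each equivalence class of $\equiv$, that is, along each connected component of $E$. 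I would then verify that $f$ is continuous for the $\top$ topology: because $f$ is constant on connected components, each preimage $f^{-1}(\lambda)$ is a union of connected components, and since these components are clopen in the $\top$ topology (as identified with the classes of $\equiv$ in the Proposition at the end of the preliminaries), $f^{-1}(\lambda)$ is open; as $K$ is discrete this yields $f \in C(E^0,K)$. By construction $\Omega(f)(e_i) = f(e_i)e_i = t_{ii}e_i = T(e_i)$, so $\Omega(f) = T$, proving surjectivity.

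Finally, the asserted isomorphism $\C(A) \cong K^{\mathfrak{C}}$ follows by composing the inverse of $\Omega$ with the identification $C(E^0,K) \cong K^{\mathfrak{C}}$ from Remark~\ref{isomap}; that the same conclusion holds for \emph{any} graph associated to $A$ is automatic, since $\C(A)$ depends only on the algebra $A$ and not on the chosen natural basis. The only genuinely substantive point, rather than a routine verification, is the bridge between the purely algebraic diagonal condition $\w_{ki}(t_{ii}-t_{kk})=0$ and the combinatorial/topological notion of connected component; but this bridge is already laid by the earlier Proposition identifying $\top$-components with $\equiv$-classes and by the clopenness of those classes, so I expect no real obstacle beyond assembling these ingredients carefully.
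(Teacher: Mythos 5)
Your proposal is correct and follows essentially the same route as the paper's own proof: surjectivity via the system \eqref{sistcinc} forcing $T$ diagonal with $t_{ii}=t_{kk}$ along edges, hence constancy (and continuity) of $f(e_i)=t_{ii}$ on connected components, followed by the identification of Remark~\ref{isomap}. You merely spell out in more detail the continuity check and the independence of the basis, both of which the paper treats as immediate.
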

\begin{proof}
Let $B=\{e_i\}_{i\in \Lambda}$ be a natural basis of $A$ and $E=(E^0,E^1,r_E,s_E)$ the graph associated to $A$ relative to $B$. We prove that the map $\Omega$ of Lemma~\ref{atarazana} is surjective. 
Given $T\in\C(A)$ we have that $T(e_i)=t_{ii}e_i$ for each $i\in \Lambda$. Then, the map $f\colon E^0\to K$ such that
$f(e_i)=t_{ii}$ is continuous (being constant along connected components, see system \eqref{sistcinc}) and satisfies $\Omega(f)=T$. So, using Remark \ref{isomap}, we get that $\C(A)$ is isomorphic to $K^{\mathfrak{C}}$, where  $\mathfrak{C}$ is the set of connected components of the graph $E$. This implies that the set of connected components does not depend of the natural basis chosen.
\end{proof}

We have characterized the centralizer of zero annihilator evolution algebras. Next, we want to link the structure of the centroid with the structure of the algebra itself. Concretely, there is a one-to-one correspondence between simple summands of the centroid and indecomposable summands of the algebra. So, the structure of the centroid "reflects" in a way that of the algebra. To get this, we begin by developing some general concepts about the diagonalization of linear maps.

Let $V$ be a $K$-vector space and $\F$ any collection of linear maps $V\to V$. We say that $\F$ is {\em simultaneously diagonalizable} if there is a basis $B=\{e_i\}_{i\in\Lambda}$ of $V$ diagonalizing all the elements of $\F$, that is, for any $T\in\F$ and $i\in\Lambda$, one has $T(e_i)\in Ke_i$. Notice that such a family is commutative in the sense that $TS=ST$ for any $S,T\in\F$. 
We can write $T(e_i)=\l_i(T) e_i$, where $\l_i(T)\in K$ for any $i\in\Lambda$ and $T\in\F$. Furthermore, we define an equivalence relation on $\Lambda$ by declaring $i\sim j$ if and only if $\l_i(T)=\l_j(T)$ for any $T\in\F$. Denote $\Gamma:=\Lambda/\sim$, the quotient set modulo $\sim$, and by $[i]$ the equivalence class of $i\in\Lambda$, that is, $[i]=\{j\in\Lambda\colon \forall  \, T\in\F, \l_j(T)=\l_i(T)\}$.
Define $V_{[i]}:= \Span{(\{e_j\colon j\sim i\})}$, consequently,
\begin{equation}\label{simdiag}
V=\bigoplus_{\a\in\Gamma} V_\a
\end{equation}
Furthermore, it is easy to check that $V_{[i]} =\{x\in V\colon \forall \, T\in\F, T(x)=\l_i(T)x\}$. 

Now, apply this simultaneous diagonalization to the underlying vector space of the zero annihilator $K$-algebra $A$ and the family of simultaneously diagonalizable linear maps $\C(A)$ in order to obtain  a decomposition of $A $ as $$A=\bigoplus_{\a\in \Gamma} A_\a,$$
where for $\a=[i]$, one has $A_\a:=\{x\in A\colon\forall T\in\C(A), T(x)=\lambda_i(T)x\}$. We can say that $T\vert_{A_{[i]}}=\l_i(T)1_{A_{[i]}}$. Notice that each $A_\a$ is an ideal of $A$
and a zero annihilator algebra. Next, we characterize each $A_\alpha$.
\begin{proposition}
    In the setting above, each $A_\a$ is an indecomposable algebra whose centralizer is isomorphic to $K$.
\end{proposition}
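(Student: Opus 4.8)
The plan is to read off $\C(A_\a)$ from the defining eigenspace property of the summands and then derive indecomposability from the smallness of the centroid. Recall that for $\a=[i]$ one has $A_\a=\{x\in A\colon T(x)=\l_i(T)x \text{ for all } T\in\C(A)\}$, so that \emph{every} $T\in\C(A)$ restricts to the scalar operator $\l_i(T)\,\mathrm{id}_{A_\a}$ on $A_\a$.

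First I would compute the centroid of $A_\a$. The key step is that the restriction homomorphism $\C(A)\to\C(A_\a)$, $T\mapsto T|_{A_\a}$, is surjective. One way to see this is to take $S\in\C(A_\a)$ and extend it by zero on $\bigoplus_{\b\ne\a}A_\b$, obtaining a linear map $\tilde S\colon A\to A$; a direct check, using that $A_\b A_\gamma\subseteq A_\b\cap A_\gamma=0$ whenever $\b\ne\gamma$, shows $\tilde S\in\C(A)$ with $\tilde S|_{A_\a}=S$ (this is precisely the surjectivity established in Proposition~\ref{rocks}(iii)). Since the image of this restriction map consists of scalar operators, it is a $K$-subalgebra of $\mathrm{End}_K(A_\a)$ contained in $K\,\mathrm{id}_{A_\a}$ and containing $\mathrm{id}_{A_\a}$ (the restriction of $\mathrm{id}_A$); hence it equals $K\,\mathrm{id}_{A_\a}$. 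Therefore $\C(A_\a)=K\,\mathrm{id}_{A_\a}\cong K$.

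Then I would prove indecomposability using the absence of nontrivial idempotents in $K$. Suppose $A_\a=I\oplus J$ with $I,J$ nonzero ideals. As $A_\a$ is commutative we have $IJ\subseteq I\cap J=0$, so the projection $\pi$ onto $I$ along $J$ satisfies $\pi(xy)=\pi(x)y=x\pi(y)$ for all $x,y\in A_\a$, i.e. $\pi\in\C(A_\a)$. But $\pi$ is a nontrivial idempotent ($\pi^2=\pi$, $\pi\ne 0,\mathrm{id}$), contradicting $\C(A_\a)\cong K$. Hence $A_\a$ is indecomposable. Alternatively, applying Proposition~\ref{rocks}(iii) to $A_\a=I\oplus J$ gives $\C(A_\a)\cong\C(I)\times\C(J)$, which is at least two-dimensional, again contradicting $\C(A_\a)\cong K$.

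The two arguments are formal once the eigenspace description of $A_\a$ is available; I expect the only delicate point to be the verification that the extension-by-zero $\tilde S$ really is a centralizer of $A$, where the orthogonality $A_\b A_\gamma=0$ for $\b\ne\gamma$ is essential. This bookkeeping, rather than any substantial new idea, is the main thing to get right.
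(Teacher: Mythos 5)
Your proof is correct and follows essentially the same route as the paper: first identify $\C(A_\a)$ with $K$ using that every centralizer of $A$ acts as the scalar $\l_i(T)$ on $A_\a$, and then rule out a splitting $A_\a=I\oplus J$ by exhibiting the projection onto $I$ as a forbidden element of the centroid. The only differences are cosmetic: you make explicit, via the surjectivity in Proposition~\ref{rocks}(iii), why every element of $\C(A_\a)$ is scalar (a point the paper's map $\theta$ leaves implicit), and you phrase the final contradiction as the absence of nontrivial idempotents in $K$ rather than as $\dim\C(A_\a)\ge 2$.
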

\begin{proof}
    Let us prove first that $\C(A_\a)\cong K$ for any $\a$. Define the map $\C(A_\a)\overset{\theta}{\to} K$ by
    $T\mapsto \lambda_i(T)$, where $[i]=\a$. So, $\theta(T)$ is the scalar such that $T=\theta(T)1_{A_\a}$. Hence, for $S,T\in\C(A_\a)$ we have $ST=\theta(S)\theta(T)1_{A_\a}$ which implies $\theta(ST)=\theta(S)\theta(T)$. Clearly, $\theta$ is an injective $K$-algebra homomorphism. Therefore it is an isomorphism, since $\text{Im}(\theta)$ is not zero (for instance
    $\theta(1_{A_\a})=1$). Thus, $\C(A_\a)\cong K$ for any $\a$.
    
    Let us now prove the indecomposability of each $A_\a$. Assume that $A_\a=I\oplus J$, with $I,J\triangleleft A$ both nonzero.
    Consider the projections $\pi\colon A\to A$ such that $\pi\vert_I=1_I$ and $\pi(J)=0$ and similarly $\pi'\colon A\to A$ such that $\pi'\vert_J=1_J$ and $\pi'(I)=0$. Notice that both $\pi,\pi'\in\C(A)$ because taking elements $i_1,i_2\in I$ and $j_1,j_2\in J$ one has
    $\pi((i_1+j_1)(i_2+j_2))=i_1i_2=\pi(i_1+j_1)i_2=\pi(i_1+j_1)(i_2+j_2)$ and similarly the other identities required for $\pi\in\C(A)$. Also $\pi'=1_A-\pi$ is in $\C(A)$. Furthermore, the reader can check that $\{\pi,\pi'\}$ is a $K$-linearly independent set.  But this implies that $\dim(\C(A_\a))\ge 2$, which contradicts  $\C(A_\a)\cong K$. 
\end{proof}

Summarizing, we apply these results to evolution algebras and  we obtain the following theorem.

\begin{theorem}\label{praiajurere}
    For any zero annihilator evolution $K$-algebra $A$, we have a direct sum decomposition $A=\oplus_{\a\in\Gamma} A_\a$, where each $A_\a$ is an ideal that is indecomposable as algebra on its own, and $\C(A_\a)\cong K$. Moreover, $\C(A)\cong K^{\mathfrak{C}}$, where $\mathfrak{C}$ is the set of connected components of any graph associated to $A$.  Also, $\vert\Gamma\vert=\vert\mathfrak{C}\vert$.
\end{theorem}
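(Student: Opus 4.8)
The plan is to assemble this theorem almost entirely from the apparatus built up immediately above it, the single piece of genuinely new work being the cardinality equality $|\Gamma|=|\mathfrak{C}|$. First I would record the decomposition. Since $A$ is a zero annihilator evolution algebra, the system \eqref{sistcinc} shows that every $T\in\C(A)$ is diagonal with respect to the fixed natural basis $B=\{e_i\}$; hence $\C(A)$ is a simultaneously diagonalizable family of linear operators on the underlying vector space of $A$. Applying the diagonalization construction culminating in \eqref{simdiag} to $V=A$ with $\F=\C(A)$ produces the decomposition $A=\bigoplus_{\a\in\Gamma}A_\a$ with $\Gamma=\Lambda/\!\sim$ and $A_\a=\{x\in A\colon T(x)=\lambda_i(T)x \text{ for all } T\in\C(A)\}$ where $\a=[i]$. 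As already observed, each $A_\a$ is an ideal and a zero annihilator algebra, and the preceding proposition gives that each $A_\a$ is indecomposable with $\C(A_\a)\cong K$. The isomorphism $\C(A)\cong K^{\mathfrak{C}}$ is exactly the content of Lemma~\ref{isomega}.

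It remains to establish $|\Gamma|=|\mathfrak{C}|$, and the cleanest route is to show that the relation $\sim$ on $\Lambda$ coincides with the relation ``lying in the same connected component of $E$''. By Lemma~\ref{isomega} the monomorphism $\Omega$ of Lemma~\ref{atarazana} is onto, so every centralizer has the form $\Omega(f)$ with $f\in C(E^0,K)$, whence $\lambda_i(T)=f(e_i)$. If $e_i$ and $e_j$ lie in the same connected component then $f(e_i)=f(e_j)$ for every continuous $f$, giving $\lambda_i(T)=\lambda_j(T)$ for all $T$, i.e.\ $i\sim j$. Conversely, if $e_i$ and $e_j$ lie in distinct components, the indicator function of the component of $e_i$ is continuous and separates them, producing a centralizer with $\lambda_i(T)\neq\lambda_j(T)$, so $i\not\sim j$. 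Thus $\Gamma$ and $\mathfrak{C}$ are literally the same partition of $E^0=\Lambda$, and $A_\a=\Span(\{e_j\colon e_j \text{ in the component } \a\})$; in particular $|\Gamma|=|\mathfrak{C}|$.

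As a robust alternative for the last equality I would instead apply Proposition~\ref{rocks} to the decomposition just obtained, giving $\C(A)\cong\prod_{\a\in\Gamma}\C(A_\a)\cong K^{\Gamma}$, and combine this with $\C(A)\cong K^{\mathfrak{C}}$ from Lemma~\ref{isomega}; counting primitive idempotents (which, in a product of copies of a field, correspond bijectively to singletons of the index set) then yields $|\Gamma|=|\mathfrak{C}|$. The main obstacle worth flagging is precisely here: the isomorphism $K^{\Gamma}\cong K^{\mathfrak{C}}$ does \emph{not} by itself force $|\Gamma|=|\mathfrak{C}|$ when the index sets are infinite, since a naive dimension count over $K$ fails for infinite products. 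Identifying $\sim$ with the connected-component relation sidesteps this entirely by making the two partitions equal on the nose, while the idempotent-counting argument provides a dimension-free backup.
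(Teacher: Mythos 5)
Your proposal is correct and follows essentially the same route as the paper, which states this theorem as a summary of the simultaneous-diagonalization construction, the preceding proposition, and Lemma~\ref{isomega} without writing out a separate proof. Your identification of the relation $\sim$ with the connected-component partition (via the clopen components and the surjectivity of $\Omega$) is exactly the detail the paper leaves implicit for $\vert\Gamma\vert=\vert\mathfrak{C}\vert$, and your caution about inferring cardinalities from $K^{\Gamma}\cong K^{\mathfrak{C}}$ alone is well placed.
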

\begin{remark}\rm
Notice that the above theorem implies that the graphs
corresponding to different natural basis have the same "number" of connected components for any two natural basis. More precisely, there is a one-to-one correspondence between the sets of components of both graphs. 
\end{remark}

It remains to be investigated if there is some kind of uniqueness in the previous decomposition. We can apply \cite[12.6 Theorem (Azumaya), p. 144]{anderson}. To do that, we consider the associative algebra  $\text{End}_K(A)$, that is, the algebra of all linear maps $A\to A$ with the composition product. Recall 
the unital multiplication algebra $\M(A)$, which is nothing but the subalgebra of $\text{End}_K(A)$ generated by $1_A$ and all the multiplication operators $L_x$, $x\in A$. Then, $\M(A)$ is an associative algebra, and $A$ is an $\M(A)$-module in a natural way. The $\M(A)$-submodules of $A$ are precisely the ideals of $A$, and indecomposable $\M(A)$-modules correspond with indecomposable ideals of $A$. Thus, the decomposition
$A=\oplus_{\a\in\Gamma}A_\a$ is a decomposition of the $\M(A)$-module $A$ as a direct sum of indecomposable modules. On the other hand, $\text{End}_{\M(A)}(A_\a)$ is just the centroid of $A_\a$, which is isomorphic to $K$, and hence it is a local ring, which enables us to apply \cite[12.6 Theorem (Azumaya), p. 144]{anderson}. We conclude that

\begin{theorem}\label{praiaMole}
Any two decompositions of a zero annihilator evolution algebra $A$ as in Theorem \ref{praiajurere} are unique up to reordering and isomorphism. 
\end{theorem}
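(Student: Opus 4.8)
The plan is to deduce Theorem~\ref{praiaMole} directly from the Krull--Schmidt--Azumaya machinery already set up in the discussion preceding it, so that essentially no new computation is required. The key observation is that uniqueness of the decomposition $A=\oplus_{\a\in\Gamma}A_\a$ is \emph{not} an intrinsically algebra-theoretic statement but a module-theoretic one, once we view $A$ as a module over its (unital) multiplication algebra $\M(A)$. First I would recall that the $\M(A)$-submodules of $A$ coincide with the ideals of $A$, so that the decomposition from Theorem~\ref{praiajurere} is precisely a decomposition of the $\M(A)$-module $A$ into indecomposable submodules. The content of Theorem~\ref{praiaMole} is then exactly the assertion that this module decomposition is unique up to reordering and isomorphism.

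To invoke Azumaya's theorem (\cite[12.6 Theorem, p.~144]{anderson}), the essential hypothesis to verify is that each indecomposable summand has \emph{local} endomorphism ring. Here I would use that $\operatorname{End}_{\M(A)}(A_\a)$ is canonically identified with the centroid $\C(A_\a)$: an $\M(A)$-module endomorphism of $A_\a$ is precisely a linear map commuting with all multiplication operators, which is the defining property of a centralizer. By Theorem~\ref{praiajurere} we already know $\C(A_\a)\cong K$, and $K$ is a field, hence a local ring. This is the crux of the argument, and it is already essentially proved earlier; the remaining work is only to spell out the identification $\operatorname{End}_{\M(A)}(A_\a)\cong\C(A_\a)$ and to note that the indecomposability of $A_\a$ as an algebra (equivalently, as an $\M(A)$-module) is exactly what Theorem~\ref{praiajurere} supplies.

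With these two inputs in hand, Azumaya's theorem applies verbatim: any two decompositions of $A$ into indecomposable $\M(A)$-submodules with local endomorphism rings are unique up to a permutation of the summands and $\M(A)$-module isomorphisms between corresponding pieces. Translating back, the $\M(A)$-module isomorphisms are isomorphisms of the $A_\a$ as ideals, and hence as algebras, so the decomposition of $A$ as in Theorem~\ref{praiajurere} is unique up to reordering and (algebra) isomorphism, which is the desired conclusion.

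The main obstacle I anticipate is not the algebra but the applicability of Azumaya's theorem in the possibly infinite-dimensional setting: Azumaya's theorem as usually stated guarantees uniqueness for \emph{any} two decompositions into indecomposables with local endomorphism rings, without a finiteness assumption on the index set, so this should cause no difficulty; however I would double-check that the cited formulation in \cite{anderson} indeed permits arbitrary (possibly infinite) families of summands, since $\Lambda$ and hence $\Gamma$ need not be finite. If the cited version requires a finite index set, the fix is to appeal to the general Krull--Remak--Schmidt--Azumaya theorem for direct sums of modules with local endomorphism rings, which does hold without finiteness. All other steps are formal identifications already carried out in the surrounding text.
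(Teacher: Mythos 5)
Your proposal is correct and follows essentially the same route as the paper: the paper's argument is precisely to view $A$ as a module over the unital multiplication algebra $\M(A)$, identify $\operatorname{End}_{\M(A)}(A_\a)$ with $\C(A_\a)\cong K$ (a local ring), and invoke Azumaya's theorem from \cite[12.6 Theorem, p.~144]{anderson}. Your added remark about the infinite index set is a sensible check, and the cited formulation does indeed allow arbitrary families of summands.
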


Note that the zero annihilator hypothesis is not superfluous for the uniqueness of the above decomposition. Indeed, consider the isomorphic degenerate evolution algebras $A$ and $A'$,  with natural basis $B=\{e_i\}_{i=1}^2$ and $B'=\{u_i\}_{i=1}^2$ respectively, such that $e_1^2=e_1$, $e_2^2=0$, $u_1^2=0$ and $u_2^2=u_1+u_2$. They do not have the same number of connected components. We picture the graphs of the algebras below.

\bigskip

\begin{equation*} 
   A:   \xymatrix{
     & {\bullet}_{e_{1}}\ar@(ul,ur)& {\bullet}_{e_{2}}   &    \\
           }   
   A':   \xymatrix{
     & {\bullet}_{u_{1}}& {\bullet}_{u_{2}}\ar@(ul,ur) 
\ar[l] &    \\
           }
           \end{equation*}

As a consequence of our analysis, we compute the centroid of any prime evolution algebra.
           
\begin{corollary}
    If $A$ is a prime evolution algebra, then $\C(A)\cong K$.
\end{corollary}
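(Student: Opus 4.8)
The plan is to deduce this corollary directly from the structural results already established for zero annihilator evolution algebras. First I would observe that a prime algebra is in particular semiprime, and by Proposition~\ref{ostra}\eqref{jacare} every semiprime evolution algebra has zero annihilator. Thus $A$ falls within the scope of Theorem~\ref{praiajurere}, which gives $\C(A)\cong K^{\mathfrak{C}}$, where $\mathfrak{C}$ is the set of connected components of any associated graph $E$.

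The key step is then to show that a prime evolution algebra has exactly one connected component, i.e.\ $\vert\mathfrak{C}\vert=1$. For this I would invoke Theorem~\ref{despierto}: since $A$ is prime, any associated graph $E$ is downward directed. I would argue that a downward directed graph is connected in the sense of the relation $\equiv$: given any two vertices $e_i,e_j\in E^0$, downward directedness yields a vertex $e_k$ with $e_i\geq e_k$ and $e_j\geq e_k$, so there are paths realizing $e_i\equiv e_k$ and $e_j\equiv e_k$ (each edge of a path gives a $1$-connection), whence $e_i\equiv e_j$. Therefore $E^0$ is a single equivalence class, so $\mathfrak{C}$ is a singleton and $K^{\mathfrak{C}}\cong K$.

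Putting these together gives $\C(A)\cong K^{\mathfrak{C}}\cong K$, as desired.

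I do not anticipate a genuine obstacle here; the corollary is a straightforward combination of Proposition~\ref{ostra}, Theorem~\ref{despierto}, and Theorem~\ref{praiajurere}. The only point requiring a line of care is the passage from \emph{downward directed} to \emph{connected}, since downward directedness is phrased via the reachability preorder $\geq$ while connectivity is phrased via the symmetric relation $\equiv$; the observation that $u\geq v$ implies $u\equiv v$ (read off the edges of a witnessing path) closes this gap. Alternatively, one could cite the already-remarked fact after Theorem~\ref{praiajurere} that prime algebras have only one component, but writing out the short graph-theoretic argument makes the corollary self-contained.
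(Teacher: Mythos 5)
Your proposal is correct and follows essentially the same route as the paper: prime $\Rightarrow$ zero annihilator (via Proposition~\ref{ostra}), prime $\Rightarrow$ downward directed (Theorem~\ref{despierto}) $\Rightarrow$ one connected component, then apply Theorem~\ref{praiajurere}. The only cosmetic difference is that you reach the zero annihilator conclusion through semiprimeness and item~(\ref{jacare}) while the paper cites item~(\ref{papaya}), and you spell out the (correct) short argument that downward directedness forces a single $\equiv$-class.
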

\begin{proof}
If $A$ is prime, then, by Theorem~\ref{despierto}, the associated graph is downward directed and so has only one connected component. Furthermore,  by Proposition~\ref{ostra}(\ref{papaya}) $A$ is a zero annihilator algebra. So, applying Theorem~\ref{praiajurere}, we obtain the desired result.
\end{proof}

\section*{Acknowledgement}

Part of the research work that led us to this article was carried out during research stays in Brazil by the first, fourth, fifth, and sixth authors. These authors thank the Federal University of Santa Catarina for their hospitality and generosity. The research leading to this article was conducted, in part, during research stays in Spain by the second and third authors. These authors express their gratitude to the Universidad de Málaga for its warm hospitality and generous support. The third author was partially supported by Capes-Print Brazil, Conselho Nacional de Desenvolvimento Cient\'ifico e Tecnol\'ogico (CNPq) - Brazil, and Funda\c{c}\~ao de Amparo \`a Pesquisa e Inova\c{c}\~ao do Estado de Santa Catarina (FAPESC). The fourth and fifth authors were supported by the Brazilian Federal Agency for Support and Evaluation of Graduate Education â Capes with Process numbers: 88887.895676/2023-00 and 88887.895611/2023-00 respectively. The first,  fourth, fifth, and sixth authors are supported by the Spanish Ministerio de Ciencia e Innovaci\'on   through project  PID2019-104236GB-I00/AEI/10.13039/501100011033 and by the Junta de Andaluc\'{i}a  through projects  FQM-336 and UMA18-FEDERJA-119, all of them with FEDER. The sixth author is supported by a Junta de Andalucía PID fellowship no. PREDOC\_00029.  The Junta de Andalucía partially supports the second and third authors through project FQM-336.

\section{Declarations}

\subsection*{Ethical Approval:}

This declaration is not applicable.

\subsection*{Conflicts of interests/Competing interests:} We have no conflicts of interests/competing interests to disclose.

\subsection*{Authors' contributions:}

All authors contributed equally to this work. 

\subsection*{Data Availability Statement:} The authors confirm that the data supporting the findings of this study are available within the article.
\vskip 1cm

\bibliographystyle{acm}
\bibliography{ref}

\end{document}